\newtheorem{thm}{Theorem}
\newtheorem{lem}{Lemma}
\newtheorem{cor}{Corollary}
\newtheorem*{defn}{Definition}
\newtheorem{cond}{Condition}
\def\R{\mathbb{R}}
\def\Re{\mathcal{R}}
\def\E{\mathbb{E}}
\def\Z{\mathbb{Z}}
\def\1{\textnormal{\textbf{1}}}
\def\ddt{\frac{d}{d\theta}}
\def\ppt{\frac{\partial}{\partial\theta}}
\def\ppti{\frac{\partial}{\partial\theta_i}}
\def\th{^{\textrm{th}}}
\def\st{^{\textrm{st}}}
\def\xtt{X_\theta(t)}
\def\xts{X_\theta(s)}
\def\xtheta{X_\theta}
\def\ztt{Z_\theta(t)}
\def\zts{Z_\theta(s)}
\def\ztheta{Z_\theta}
\title{Hybrid Pathwise Sensitivity Methods
for Discrete Stochastic  Models of Chemical Reaction Systems}
\author{Elizabeth Skubak Wolf\footnote{Saint Mary's College, ewolf@saintmarys.edu.}, \ David F. Anderson\footnote{University of Wisconsin at Madison, anderson@math.wisc.edu.}}
\begin{document}

\maketitle

\begin{abstract}

Stochastic  models are often 
used to help understand the behavior of intracellular biochemical processes.  The most common such models are continuous time Markov chains (CTMCs).  
Parametric sensitivities, which are derivatives of expectations of model output quantities with respect to model parameters, are useful in this setting for a variety of applications.  
 In this paper,  we introduce a class of hybrid pathwise differentiation methods for the numerical estimation of parametric sensitivities.  The new hybrid methods combine elements from the three main classes of procedures for sensitivity estimation, and have a number of desirable qualities.  First, the new methods are unbiased for a broad class of problems.  Second, the methods are applicable to nearly any physically relevant biochemical CTMC model.  Third, and as we demonstrate on several numerical examples, the new methods are  quite efficient, particularly if one wishes to estimate the full gradient of parametric sensitivities.  The methods are rather intuitive and utilize the multilevel Monte Carlo philosophy of splitting an expectation into separate parts and handling each in an efficient manner.
\end{abstract}

\section{Introduction}

New methods for the estimation of parametric sensitivities are introduced that are applicable to a class of stochastic models widely utilized in the biosciences.  In particular, the theoretical analysis and algorithms provided here extend the validity of the pathwise method developed by Sheppard, Rathinam, and Khammash \cite{Khammash2012}, with related earlier work by Glasserman  \cite{Glasserman1990}, to nearly all physically relevant stochastic models from biochemistry.  The extension is achieved by providing an explicit, numerically computable term for the bias introduced by standard pathwise differentiation methods.  The methods developed here are naturally unbiased and are relatively easy to implement.  Furthermore, they are quite efficient, in some cases providing a speed up of multiple orders of magnitude over the previous state of the art.

\subsection{Mathematical model and problem statement}
\textbf{Mathematical model.} We consider the parametrized family of continuous time Markov chain (CTMC) models satisfying the stochastic equation
\begin{equation}\label{eq:pmodel}
\xtt=X_\theta(0)+\sum_{k=1}^K Y_k\left(\int_0^t
\lambda_k(\theta,\xts)\, ds\right)\zeta_k,
\end{equation} 
where the state space $\mathcal{S}$ of $\xtheta$ is a subset of  $\Z^d$, $K < \infty$, the $\{Y_k\}$ are independent unit-rate Poisson processes, $\theta\in \mathbb{R}^R$ is a vector of model parameters, and where for each $k\in \{1,\dots,K\}$ we have a fixed reaction vector $\zeta_k\in\Z^d$ and a nonnegative intensity, or propensity, function $\lambda_k:\R^R\times\Z^d \to \R_{\geq 0}$.   Such models are used extensively in the study of biochemical processes \cite{AndersonKurtzBook, AndersonKurtz2011, Elowitz, Gillespie, Kurtz78, Paulsson2004, Raj, Wilkinson2011} in which case the vectors $\zeta_k$ can be  decomposed into the difference between the \textit{source vector} $\nu_k \in \Z^d_{\ge 0}$, giving the numbers of molecules required for a given reaction to proceed, and the \textit{product vector} $\nu_k'\in \Z^d_{\ge 0}$, giving the numbers of molecules produced by a given reaction.  Specifically, in this case $\zeta_k = \nu_k' - \nu_k$.   Under the assumption of mass action kinetics, which assumes intensities of the form
\begin{equation}\label{eq:mass-action}
	\lambda_k(\theta,x) = \theta_k \prod_{i = 1}^d \frac{x_i!}{(x_i-\nu_{ki})!}1_{\{x - \nu_{ki} \ge 0\}}, \quad \text{for } x \in \Z^d_{\ge 0},
\end{equation}
 the parameter vector $\theta$ commonly represents some subset of the rate constants $\{\theta_k\}$ of the $K$ reactions.  Note that in the biochemical setting the state space $\mathcal{S}$ is a subset of $\Z^d_{\ge 0}$.

Models of the form \eqref{eq:pmodel}  satisfy the Kolmogorov forward equation, which is typically called the chemical master equation in the biology and chemistry literature,
\begin{equation}\label{eq:cme}
	\frac{d}{dt} p_{\pi}^\theta(t,x) = \sum_{k=1}^K p_{\pi}^\theta (t,x-\zeta_k) \lambda_k(\theta,x-\zeta_k)1_{\{x-\zeta_k \in \mathcal S\}}- \sum_{k=1}^K p_{\pi}^\theta (t,x)\lambda_k(\theta,x),
\end{equation}
where $p_\pi^\theta(t,x)$ is the probability the state of the system is $x\in \mathcal S$ at time $t\ge 0$ given an initial distribution of $\pi$.  
 The infinitesimal generator for the CTMC \eqref{eq:pmodel} is 
 the operator $\mathcal{A}^\theta$ defined via
\begin{equation}\label{eq:gen} (\mathcal{A}^\theta f)(x) = \sum_{k=1}^K \lambda_k(\theta,x)(f(x+\zeta_k)-f(x)),
\end{equation}
for  $f:\mathbb{R}^d\rightarrow\mathbb{R}$ vanishing off a finite set \cite{EthierKurtz}.
For more background on this model see \cite{AndersonKurtzBook, AndersonKurtz2011,Kurtz78,Kurtz1980}. 

    We note that many lattice-valued processes can be represented similarly to \eqref{eq:pmodel}, where a counting process is used to determine the number of jumps that have taken place in one of finitely many specified directions.
    In particular, models satisfying  \eqref{eq:pmodel} also arise in queueing theory and the study of population processes.  
    As biochemical reaction networks are the main motivation for this work, we use biochemical terminology and examples throughout, and simply note that the presented methods are also applicable in those other settings.  

\vspace{.125in}

\noindent \textbf{Problem statement.}
The process $\xtheta$ satisfying \eqref{eq:pmodel} is right continuous and has left hand limits.  That is, $\xtheta$ is c\`adl\`ag and is an element of the Skorohod space $D_{\Z^d}[0,\infty)$.     Consider the output quantity of the CTMC model \eqref{eq:pmodel} given by $ \E [f(\theta, \xtheta)]$,
	  where $f:\R^R\times  D_{\Z^d}[0,\infty) \to \R$ is some measurable function of $\theta$ and $\xtheta$.  
	 We are interested in the problem of numerically computing the gradient $\nabla_\theta \E[f(\theta,\xtheta)]$ for a wide class of functionals $f$.  
	Specifically, we are interested in functionals of the form
	  \begin{equation}\label{eq:setting1}
	  	f(\theta,X_\theta) = h(\theta,X_{\theta}(t)), \quad \text{for $t$ fixed},
	  \end{equation}
	  where $h:\R^R \times \Z^d \to \R,$
	  or path functionals of the form 
	  \begin{align}\label{eq:409857}
	  	L(\theta) := \int_a^b F(\theta,\xts)\, ds,
	\end{align}
	where $0\le a\le b<\infty $ and $F: \R^R\times \Z^d \to \R$.   We will  write $L_X(\theta)$ for $L(\theta)$ when we wish to be clear about the underlying process $X$, and will denote $J(\theta) := \E[L(\theta)]$.

	We will focus most of our attention on functionals of the form \eqref{eq:409857} as we will show in Section \ref{sec:smoothing} how basic smoothing procedures allow us to use such functionals in conjunction with our new methods to provide estimates for $\nabla_\theta \E[f(\theta,X_\theta)]$ when  $f$ is of the form \eqref{eq:setting1}.  
	 Thus,  under some mild regularity conditions on the  functions $\lambda_k$ and $F$ (see Conditions \ref{lineargrowth} and \ref{Fcond} in this section below), we focus on the problem of estimating 
	\begin{equation}\label{eq:sens}
	\nabla_\theta J(\theta) = \nabla_{\theta} \E[ L(\theta)] = \left[ \ppti \E \left(\int_a^b F(\theta,\xts)\, ds\right)\right]_{i=1,\dots,R}
	\end{equation}	
	at some  fixed value $\theta_0 \in \R^R$.  We will generally write $\theta$ rather than $\theta_0$ if the context is clear.

\subsection{A brief review of methods}
\label{sec:1:methods}

Due to the importance of having reliable numerical estimators for  gradients, there has recently been a plethora of research articles focusing on their development and analysis \cite{Anderson2012,AK2014,GuptaKhammash,GK2014,munsky,PlyasunovArkin, Khammash2010,Khammash2012,RawlingsSrivAnd}.   
There are three main classes of methods that carry out the  task of estimating these derivatives: finite difference methods, likelihood ratio methods, and pathwise methods.  Each class has its own benefits and drawbacks.

\begin{itemize}
\item[--] Estimators built via \textbf{finite differences} are easy to implement and  often have a low variance.  However, these estimators provide a  biased estimate \cite{Anderson2012,Khammash2010,RawlingsSrivAnd}.  See Section \ref{sec:FD}.
\item[--] Estimators built using \textbf{likelihood ratios} are unbiased, but often have a high variance \cite{Anderson2012, PlyasunovArkin}.  The use of the usual weight function as a control variate can lower the variance, sometimes dramatically so.  See Section \ref{sec:lrest}.
\item[--] \textbf{Pathwise methods}, known as (infinitesimal) perturbation analysis in the discrete event systems literature \cite{Glasserman1990, GongHo}, are unbiased and are often quite fast \cite{Khammash2012}.  Unfortunately, biochemical models only rarely satisfy the conditions required for the applicability of these methods.  See, for example, the appendix of \cite{Khammash2012} and Section \ref{sec:pathwise} below.    Greatly expanding  the applicability of the pathwise methods already developed for biochemical processes, for example in \cite{Khammash2012}, is one of the main contributions of this work.
\end{itemize}

In some recent works Gupta and Khammash  have developed a new type of  method that does not fit neatly into one of the above categories \cite{GuptaKhammash,GK2014b}.  
Their new method, the Poisson path approximation (PPA) method, which is an improvement on their auxiliary path approximation (APA) method introduced in \cite{GuptaKhammash},  is unbiased and is quite efficient  \cite{GK2014b}.   This method does, however, require additional simulation of partial paths, which may significantly reduce efficiency on some models.

\subsection{A high level overview of the present work}

Elements from each of the three general classes of methods outlined in Section \ref{sec:1:methods} above will be  utilized in the development of  estimators that combine the strengths of each.  Further, the methods introduced here utilize the  multilevel Monte Carlo philosophy by splitting a desired quantity into pieces, and then handling each piece separately and efficiently \cite{AH2012,Giles2008}. Specifically, much of the computational work is carried out with a pathwise method \cite{Khammash2012} applied to an approximate process, ensuring the overall method is efficient.  In order to correct for the bias introduced by the use of an approximate process, the gradient of an error term is computed.   The error term is represented as the expectation of a function of a coupling between the original process and the approximate process.  The likelihood ratio method is used to compute the necessary derivative on this error term.  The coupling used between the exact and approximate processes is the split coupling \cite{Anderson2012,AK2014}.    

Expanding upon the previous paragraph, we give a high level summary of the new method as applied to the functional $L_X(\theta)$ in \eqref{eq:409857}.  First note that by adding and subtracting the appropriate terms,
\[
	\E\left[ \int_a^b F(\theta,\xts) \, ds\right]  = \E\left[\int_a^b (F(\theta,\xts) \, ds - F(\theta,\zts)) \, ds\right] +\E\left[\int_a^b F(\theta,\zts) \, ds\right],
\]
where $Z_\theta$ is any process that can be built on the same probability space as $\xtheta$, and where we assume the expectations above are finite.  Then, assuming the derivatives exist,
\begin{align}\label{eq:2408957}
\nabla_\theta \E\left[ \int_a^b F(\theta,\xts) \, ds\right]  = \nabla_\theta\E\left[\int_a^b (F(\theta,\xts) \, ds - F(\theta,\zts)) \, ds\right] +\nabla_\theta\E\left[\int_a^b F(\theta,\zts) \, ds\right].
\end{align}
We are now able to use different methods to compute the two derivatives on the right-hand side of the above equation.  We have complete control over $Z_\theta$, and we will construct it so that (i)  pathwise methods may be utilized for the final derivative on the right-hand side of \eqref{eq:2408957}, and (ii) $\ztheta$ is a good approximation to $\xtheta$.  The error term, which is the first term on the right-hand side of \eqref{eq:2408957}, will be estimated via a likelihood ratio method.  The efficiency of the overall method  rests upon two facts.  First, the error term can be quickly estimated because its variance will be small if  $Z_\theta$ is a good approximation to $\xtheta$.   This helps overcome the often problematically large variance of a likelihood estimator. Second, the final derivative can be estimated quickly because pathwise methods are fast when they are applicable.  

In this paper we present what we believe is a reasonable choice for the process $\ztheta$ in \eqref{eq:2408957}.  Specifically, it will have the same jump directions $\{\zeta_k\}$ as $\xtheta$, but different intensity functions and an enlarged state space.  While we hope to impart why we believe it to be a good choice, many other options for $\ztheta$ exist and can be explored in future research.  Improvements in the selection of the process $\ztheta$ will correspond with improvements to the overall method.  The use of multilevel Monte Carlo on either of the needed derivatives could also lead to a significant improvement in efficiency.

Our numerical examples section shows that the methods we introduce here fit well into the group of existing  methods  for the numerical estimation of parametric sensitivities in the jump process setting.  They are quite efficient on all examples, sometimes significantly more efficient than any other existing method.  However, and not surprisingly given the amount of effort that has been put into development over the past few years, they are not \textit{always} the most efficient.  In particular, sometimes PPA (Gupta and Khammash, \cite{GK2014b}) or the coupled finite difference method (Anderson, \cite{Anderson2012})  is most efficient.   With such a strong group of methods having been developed over the past few years, we feel future work in the field should also include the determination of which methods to use  for different model and problem types.

\subsection{Regularity conditions}

We end this introduction with two  regularity conditions which are necessary for the validity of the methods introduced here.  The first condition guarantees that solutions to equation \eqref{eq:pmodel} exist for all time.  The second condition relates to $F$ of \eqref{eq:409857}, and simply ensures that $F$ does not grow too fast in the $x$ variable.  Both conditions are required in our proofs in Appendix \ref{app:PE}.
Conditions to be satisfied by the approximate process $Z_\theta$ will be developed as needed throughout the paper. In particular, see Conditions \ref{nonint}, \ref{cond:extra}, and \ref{coupledlrcond}.

For $x\in \Z^d$ we use the notation $\Vert x \Vert$  to denote the 1-norm, $\Vert x \Vert = \sum_{i=1}^d|x_i|$. 

\begin{defn}
We say that $h: \R^R \times \mathcal{S} \to \R$ has \textbf{uniform polynomial growth at $\theta$} if there is a neighborhood $\Theta \subset \R^R$ of $\theta$ and constants $C,p >0$ such that
$\big|\sup_{\theta\in\Theta} h(\theta,x)\big| \leq C(1+\Vert x\Vert^p)$ for all $x\in\mathcal{S}$.
 If $p$ may be taken to be 1, we say that $h$ has \textbf{uniform linear growth at $\theta$}.

\end{defn}
\noindent 
Let $\mathbf{1}$ denote the vector of all ones.  Define $\Re_1\subset \{1,\dots,K\}$ so that $k \in \Re_1$ if and only if $\mathbf{1}\cdot \zeta_k > 0$.  Define  $\Re_2=\{1,\dots,K\}\setminus\Re_1$.  Note that if $\mathcal S \subset \Z^d_{\ge 0}$, then  $\Re_1$ contains the indices of those reactions that increase the total population, i.e.
\[
	 \Vert x + \zeta_k \Vert > \Vert x\Vert, \quad \text{ for all } x \in \mathcal S,
\]
 while reactions with indices in $\Re_2$ either decrease the total population or leave it unchanged.

\begin{cond}\label{lineargrowth}
The intensities $\lambda_k$ satisfy this condition at $\theta$ if there is some neighborhood $\Theta \subset \R^R$ of $\theta$ such that:
\begin{enumerate}
\item for each $k\in\{1,\dots,K\}$ and  $\theta \in \Theta$, the function $\lambda_k$ has uniform polynomial growth at $\theta$;
\item for each $k\in\{1,\dots,K\}$, $i\in\{1,\dots,R\}$, and $\theta \in \Theta$, the function $\ppti\lambda_k$ exists and has uniform polynomial growth at $\theta$;
\item for each $k\in\Re_1$ and $\theta \in \Theta$, the function $\lambda_k$ has uniform linear growth at $\theta$;
\item there exist constants $p$ and $C$  such that for all $k\in \{1,\dots,R\}$ and all $x\in\mathcal{S}$
\[
	\sup_{\theta\in\Theta}\lambda_k(\theta,x) \neq 0 \Rightarrow \sup_{\theta\in\Theta}\frac{1}{\lambda_k(\theta,x)} \leq C(1+\Vert x \Vert^{p});\]
that is, for a fixed $x$, if the rates $\lambda_k(\theta,x)$ are not identically zero on  $\Theta$, then they must be bounded away from zero. 
\end{enumerate}
\end{cond}

\noindent 
Note that the third part of Condition \ref{lineargrowth}, which requires certain intensities to grow at most linearly, only applies to those intensity functions that are  associated with  transitions that increase the total population count of the system.   Essentially, this portion of Condition \ref{lineargrowth} ensures that the population does not explode in finite time, and could almost certainly be weakened.     We note that this condition was also utilized in \cite{GuptaKhammash}.  Condition \ref{lineargrowth} is satisfied for most  biochemical systems considered in the literature.  In particular, it is satisfied by any binary chemical system with mass action kinetics.\footnote{A chemical system is binary if $\sum_{i=1}^d |\nu_{ki}| \le 2$ and $\sum_{i=1}^d |\nu_{ki}'|\le 2$ for each $k \in \{1,\dots,K\}$.} 
  For example, assuming mass action kinetics, the reactions $A \to 2A$ and $2A \to B+C$ are permissible under Condition \ref{lineargrowth}.
On the other hand, Condition \ref{lineargrowth} excludes $2A \to 3A$, which increases the population at a quadratic rate, and can lead to explosions.

We turn to the  regularity conditions for $F$ of \eqref{eq:409857}.  The following condition will allow us to bound moments of $L$ using the moments of the process $X_\theta$.
\begin{cond}\label{Fcond}
Let $\Theta \subset \R^R$.  The function $F:\Theta\times \mathcal{S} \to \R$ satisfies this condition if it is measurable, and differentiable in $\theta$ on $\Theta$ so that:
\begin{enumerate}
\item
there exist constants $C_A>1$ and $p_A>1$ such that  $\displaystyle\sup_{\theta\in\Theta}
|F(\theta,x)| \leq C_A(1+\Vert x \Vert^{p_A})$ for all $x \in \mathcal{S}$;

\item there exist constants $C_B>1$ and $p_B>1$ such that for all $i\in\{1,\dots,R\}$ and $x \in \mathcal{S}$ we have\\ $\displaystyle\sup_{\theta\in\Theta}\bigg|\ppti F(\theta,x)\bigg| \leq C_B(1+\Vert x \Vert^{p_B}).
$

\end{enumerate}\end{cond}

%
%
%

The outline for the remainder of the paper is as follows.  In Section \ref{sec:pwbackground}, we introduce the three main classes of methods for the numerical estimation of parametric sensitivities.  In particular, in Section \ref{sec:pathwise} we present Thereom \ref{thm:pw_first}, which gives conditions for the validity of pathwise methods for functionals of the form \eqref{eq:409857}.  In Section \ref{sec:hyb}, we introduce an approximate process $\ztheta$ to be utilized in  \eqref{eq:2408957} and formally present the new methods.
In Section \ref{sec:hybex}, we demonstrate several numerical results, and we present conclusions in Section \ref{sec:concl}.  The proof of Theorem \ref{thm:pw_first} is included in the appendix.
	

\section{Classes of Methods}\label{sec:pwbackground}

We introduce the three main classes of methods for the numerical estimation of parametric sensitivities:  finite differences, pathwise derivatives, and likelihood ratios.   Because the methods introduced here involve both pathwise derivatives and likelihood ratios, we discuss both in detail in Sections \ref{sec:pathwise} and \ref{sec:lrest} below.  Throughout these sections, we also introduce and motivate the regularity conditions and theoretical results that are required for the approximate process $Z_\theta$ of \eqref{eq:2408957}.  In Section \ref{sec:hyb}, we will combine these pieces to succinctly introduce our new method.
Our main theoretical results pertaining to pathwise methods are stated in Section \ref{validity} and proven in the appendix. 

\subsection{Finite differences}\label{sec:FD}

Let $e_i\in \R^R$ be the vector of all zeros except a one in the $i$th component.  Finite difference methods proceed by simply noting that for  $f: \R^R \times D_{\Z^d}[0,\infty)\to \R$,
\begin{align*}
	\ppti \E[f(\theta,\xtheta)]  &\approx  h^{-1} \left( \E[f(\theta+he_i,X_{\theta + he_i})]-  \E[f(\theta,\xtheta)]\right)\\
	&= h^{-1} \E [ f(\theta+he_i,X_{\theta + he_i})- f(\theta,\xtheta)],
\end{align*}
as long as the derivatives and expectations exist, and where the final equality implies  the two processes have been built on the same probability space, or \textit{coupled}. The coupling is used in order to reduce the variance of the difference between the two random variables.   The two most useful couplings  in the present context are the common reaction path method \cite{Khammash2010} and the split coupling method \cite{Anderson2012}, the latter of which we detail explicitly in Section \ref{sec:lrest} in and around \eqref{eq:coup}.

\subsection{Pathwise methods}\label{sec:pathwise}
When using a pathwise method, one begins with a probability space that does not depend on $\theta$;  instead, one uses $\theta$ to construct the path from the underlying randomness.  For our purposes, we take a filtered probability space $(\Omega, \mathcal F, \{\mathcal{F}_t\}_{t\geq 0},Q)$ under which $\{Y_k, k=1,\dots,K\}$ are independent unit-rate Poisson processes. 
The path $X_\theta$ is then constructed by a jump by jump procedure implied by \eqref{eq:pmodel}, which is equivalent to an implementation of the next reaction method \cite{Anderson2007, Gibson2000}.
For ease of exposition, we restrict ourselves to consideration of one element of the gradient, $\ppti J(\theta)$, though calculation of the full gradient can be carried out in the obvious manner.  

Consider a general functional $f$.  If the following equality holds,
\begin{equation}\label{eq:249085}
\frac{\partial}{\partial \theta_i} \E[ f(\theta,\xtheta)] = \E\left [  \frac{\partial}{\partial \theta_i}f(\theta,\xtheta)\right],
\end{equation}
then   $\frac{\partial}{\partial \theta_i}  \E[ f(\theta,\xtheta)] $ can be estimated via Monte Carlo by repeated sampling of independent copies of the random variable $\frac{\partial}{\partial \theta_i}f(\theta,\xtheta)$.
Unfortunately, for a wide variety of models of the form \eqref{eq:pmodel} and functionals $f$, equality in  \eqref{eq:249085} does not hold.  There are typically two reasons for this.
\begin{enumerate}[1.]
\item In many cases the random variable $\frac{\partial}{\partial \theta_i}f(\theta,\xtheta)$ is almost surely zero, in which case the right hand side of \eqref{eq:249085} is zero whereas the left hand side is not.
\item The underlying process $X_\theta$ can undergo an \textit{interruption,} in which case  $\E\left [  \frac{\partial}{\partial \theta_i}f(\theta,\xtheta)\right]$ is typically non-zero, but still not equal to $\frac{\partial}{\partial \theta_i} \E[ f(\theta,\xtheta)]$.
\end{enumerate}

The first problem stated above commonly arises when $f$ is a function solely of the process at the terminal time $T$,  i.e. when $f(\theta,X_\theta) = h(X_\theta(T))$ for some $T>0$ and $h:\mathcal S \to \R$ (as in \eqref{eq:setting1} above).  Then, since $\xtheta$ is a CTMC and  has piecewise constant paths,
$ \frac{\partial}{\partial \theta_i}h(X_{\theta}(T)) = 0$ almost surely.    This type of problem is easily overcome by any number of smoothing procedures, with a few outlined below in Section \ref{sec:smoothing}.  The second problem, in which there is an interruption, is a more serious problem with the method.  Interruptions are discussed in more detail in Section \ref{sec:non-interuptive} below.  Overcoming this type of problem while still utilizing the pathwise framework can be viewed as a major contribution of this work.

\subsubsection{Smoothing}
\label{sec:smoothing}

As will be seen in Section \ref{sec:pwest},  pathwise methods are capable of providing  estimates of derivatives of functionals of the form $\int_a^b F(\theta,\xts) \, ds,$
where $a, b \in \R$ and $F: \R^R\times \Z^d \to \R$ satisfies mild regularity conditions.
Thus, in order to estimate derivatives of, for example,  $\E[f(X_\theta(T))]$, where $f:\mathcal S \to \R$, one simply needs to replace $f(X_{\theta}(T))$ with an appropriate integral.  There are a number of natural choices, with only a few discussed here.

The Regularized Pathwise Derivative (RPD) method presented in \cite{Khammash2012} estimates $\ppti \E [f(X_\theta(T))]$ using independent copies of $\theta$-derivatives of 
\begin{equation}\label{eq:rpd}
L_1(\theta) := \frac{1}{2w}\int_{T-w}^{T+w} f(\xts)ds \approx f(X_\theta(T)),
\end{equation}
where $w$ is some fixed window size.
Note that even when pathwise methods can be applied to the model, i.e. when there are no interruptions, this method gives a biased estimate, with the size of the bias a function of the size of $w$.  Specifically, a smaller $w$  leads to a smaller bias but a larger variance.

Alternatively, one may martingale methods to derive an unbiased estimator.  
Specifically,  for a large set of functions $f:\Z^d \to \R$,
\begin{equation}\label{eq:mart_rep}
	f(\xtt) = f(X_\theta(0)) + \int_0^t ({\mathcal A}^\theta f) (\xts)\, ds + M_t^\theta,
\end{equation}
where $M_t^\theta$ is a local martingale and $\mathcal{A}^\theta$ is the generator \eqref{eq:gen} \cite{AndersonKurtzBook,EthierKurtz}.  In many cases of interest $M_t^\theta$ is a martingale, in which case \eqref{eq:gen} implies
\begin{align}\begin{split}\label{eq:dynkin}
\E [f(\xtt)] 
& = \E \left[ f(X_\theta(0)) + \int_0^t \sum_{k} \lambda_k(\theta,\xts)[f(\xts+\zeta_k) - f(\xts)] \, ds\right].
\end{split}\end{align}
For example, for processes $\xtheta$ that satisfy Condition \ref{lineargrowth}, which is nearly all biologically relevant processes,  equation \eqref{eq:dynkin} holds for functions $f$ that grow at most polynomially.
Therefore, another option for a smoothing functional would be to take 
\begin{equation}\label{eq:34208957}
	L_2(\theta) := f(X_\theta(0))+\int_0^T (\mathcal{A}^\theta f)(X_\theta(s))ds
\end{equation}
in which case $\ppti \E [f(X_\theta(T))] = \E[ \frac{\partial}{\partial \theta_i} L_2(\theta)]$ (see also \cite{Glasserman1990}, p.~176).
While unbiased when it applies, this estimator tends to have higher variance than the RPD estimator so long as the parameter $w$ is not taken too small.  We shall refer to the smoothing procedure \eqref{eq:34208957} as the Generator Smoothing (GS) method and will refer to the estimation procedure
\[
	\ppti \E[f(X_\theta(T))] \approx \frac{1}{n} \sum_{j = 1}^n \ppti L_2^{[j]}(\theta),
\]
where $L_2^{[j]}(\theta)$ is the $j$th independent realization of the random variable $\ppti L_2(\theta)$, as   as the GS Pathwise method.  An algorithm for the generation of the random variable  $\ppti L_2(\theta)$ is given in Section \ref{sec:pwest} below.

\subsubsection{The non-interruptive condition}
\label{sec:non-interuptive}

 Smoothing alone does not always ensure the validity of a pathwise method: for $L(\theta)$ given by \eqref{eq:409857} we still may have $\ppti \E [L(\theta)] \ne \E [ \ppti L(\theta)]$.  
Again letting $e_i \in \R^R$ be the vector of all zeros except a one in the $i$th component, for $\xtheta$ satisfying Condition \ref{lineargrowth} it is straightforward to show that
\begin{equation}\label{eq:aeconv}
\lim_{h\to 0} \E \left[\frac{L(\theta+he_i) - L(\theta)}{h} \right] = \ppti\E \left[ L(\theta) \right]
\quad \textrm{and} \quad 
\frac{L(\theta+h e_i) - L(\theta)}{h} \overset{a.s.}{\longrightarrow} \ppti L(\theta).
\end{equation}
However, to have the equality
\begin{equation}\label{eq:L1}
\ppti\E \left[ L(\theta) \right]
= \E \left[\ppti L(\theta)
 \right],
\end{equation}
we must have convergence in mean in addition to the a.s.~convergence in \eqref{eq:aeconv}.  The following condition will play a central role in achieving the convergence in mean. A similar condition was first introduced by Glasserman in the discrete event simulation literature \cite{Glasserman1990}.   Recall that $\mathcal{S}$ is the state space of our process.

\begin{cond}[Non-Interruptive]\label{nonint}
The functions $\lambda_k:\Theta \times \mathcal S \to \R_{\ge 0}$, for $k \in\{1,\dots,K\}$, satisfy this condition if for each $k,\ell\in\{1,\dots,K\}$, $x \in \mathcal S$, and  $\theta\in\Theta$, the following holds:  if $\lambda_k(\theta,x)>0$ and $\lambda_\ell(\theta,x)>0$ for $\ell\neq k$, then 
 $\lambda_\ell(\theta,x+\zeta_k)>0$.
\end{cond}

In accordance with terminology from the discrete event simulation literature, we define an  \textit{interruption} as a change in state, from $x$ to $x+\zeta_k$ for some $k$, such that for some $\ell\neq k$ we have $\lambda_\ell(\theta,x)>0$ and $\lambda_\ell(\theta,x+\zeta_k)=0$.  If an interruption occurs, the function $L(\theta)$ can have a jump discontinuity in $\theta$ for a given realization of the process, and \eqref{eq:L1} can fail to hold. 
The non-interruptive Condition \ref{nonint}, therefore, ensures that interruptions cannot occur. 

Many biological models do not satisfy Condition \ref{nonint}.
For a simple example of a model that does not satisfy the non-interruption condition, consider the  reaction network 
\[
	A \to \emptyset, \quad A \to B,
\]
which has reaction vectors 
\[
	\left[\begin{array}{c}
	-1\\0
	\end{array}\right] \quad \text{and} \quad \left[\begin{array}{c}
	-1\\1
	\end{array}\right].
\]
Endow the system with mass action kinetics and an initial condition of precisely one $A$ particle and zero $B$ particles.  Then the occurrence of either reaction will necessarily cause an interruption. 

For  models in which interruptions are possible, which includes most biochemical models, both the GS pathwise method and the RPD method may produce significant bias when estimating gradients.  See Appendix B of \cite{Khammash2012} for a comment on this issue, and see Section \ref{sec:hybex} below where the bias is demonstrated numerically.

\subsubsection{An algorithm for calculating $\ppti L(\theta)$}
\label{sec:pwest}

Providing realizations of the random variable $\ppti L(\theta)$, where $L$ is of the form \eqref{eq:409857}, is central to the methods presented here.  This section provides the necessary numerical algorithm.    The derivations are based on simulating the random time change representation \eqref{eq:pmodel} using the next reaction method.    Conditions on the intensity functions   guaranteeing that   $\ppti \E[L(\theta)] = \E[ \ppti L(\theta)]$ are provided in  Section \ref{validity} below.

 We note that the algorithm derived within this section is essentially the same as those derived in  \cite{Glasserman1990} and \cite {Khammash2012}.   This section is included for completeness, but can be safely skipped by those familiar with pathwise differentiation.

Recalling the discussion in and around \eqref{eq:2408957}, the methods introduced in this article  use pathwise differentiation on functionals of  a non-interruptive  process.  This process is typically an  \textit{approximation} of the original process.  Thus,  in this section we denote our nominal process by $\ztheta$ as opposed to $\xtheta$. 
 Further, 
for notational convenience in this section we take $\theta$ to be 1-dimensional.

Continuing, we suppose $\ztheta$ is a process satisfying the stochastic equation  \eqref{eq:pmodel} with $\theta \in \R$.  
Let $\hat Z_\ell(\theta)$ denote the $\ell\th$ state in the embedded discrete time chain of the process $\ztheta$, and let $T_\ell^\theta$ be the $\ell\th$ jump time, with $T_0^\theta=0$. 
 We are interested in computing the $\theta$-derivative of 
\begin{equation}\label{L}
L_Z(\theta) := \int_a^bF(\theta,\zts)\, ds
= \sum_{\ell=0}^{N(\theta,b)} F(\theta,\hat Z_\ell(\theta))[ T_{\ell+1}^{\theta}\wedge b -T_{\ell}^{\theta}\vee a ]^+,
\end{equation}
where $a\wedge b=\min(a,b)$ and $a\vee b = \max(a,b)$, and where $N(\theta,b)=N$ is the number of jumps of the process through time $b$. If  $\ztheta$ is a non-explosive process, then $N<\infty$ with a probability of one. 

The embedded chain is discrete-valued. Thus,  $\ppt \hat Z_\ell(\theta)=0$ a.s.~wherever the derivative exists.  Therefore, by \eqref{L},
\begin{align}\begin{split}
\label{pptL}
\ppt L_Z(\theta) = \sum_{\ell=0}^{N} \Bigg[ [ T_{\ell+1}^\theta\wedge& b -T_{\ell}^\theta\vee a ]^+
 \left(\ppt F(\theta,\hat Z_\ell(\theta)) \right) 
+ 
 F(\theta,\hat Z_\ell(\theta))\ppt[ T_{\ell+1}^\theta\wedge b -T_{\ell}^\theta\vee a ]^+ \Bigg],
\end{split}\end{align} 
where the partial of the function $F$ is with respect to the first variable.
The terms involving the derivatives $\ppt F(\theta,\hat{Z}_\ell(\theta))$ are straightforward to compute. The remaining terms require the derivatives of the jump times $T_\ell^\theta$, so we now focus on their derivation.

Define $\Delta_\ell^\theta=T_{\ell+1}^\theta-T_\ell^\theta$ to be the holding time of the process in the $\ell\th$ state (so that the indexing begins at 0). 
 Let $S_k^\theta(t) = \int_0^t \lambda_k(\theta,\zts)ds$.   
Note that $S_k^\theta(t)$ is the argument of the Poisson process $Y_k$ in the stochastic equation \eqref{eq:pmodel}.  The quantity $S_k^\theta(t)$ is therefore usually referred to as the `internal time' of $Y_k$.
 Let 
 \[
 	I_k(t) = \inf\left\{r\geq S_k^\theta(t) : Y_k(r) > Y_k(S_k^\theta(t))\right\} 
\] 
be the internal time of the first occurrence of $Y_k$ after time $S_k(t)$.
 Then the holding time of the process $Z_\theta$ in the $\ell$th state is given by
\begin{equation}\label{eq:9786786}
\Delta_\ell^\theta = \min_k \left\{ \frac{I_k(T_\ell^\theta) - S^\theta_k(T_\ell^\theta)}{ \lambda_k(\theta,\hat Z_\ell(\theta))} \right\}.
\end{equation}
Let $k_\ell$ be the argmin in the above expression; 
$k_\ell$ 
is the index of the reaction that changes the system from the $\ell\th$ to the $(\ell+1)\st$ state. Via the product rule, we have
\begin{align}\begin{split}\label{pptDelta}
\frac{\partial}{\partial\theta}\Delta_\ell^\theta 
&=  -\frac{I_{k_\ell} - S^\theta_{k_\ell}(T_\ell^\theta)}{ \lambda_{k_\ell}(\theta,\hat Z_\ell(\theta))^2}
\ppt \lambda_{k_\ell}(\theta,\hat Z_\ell(\theta)) - \lambda_{k_\ell}(\theta,\hat Z_\ell(\theta))^{-1} \ppt S^\theta_{k_\ell}(T_\ell^\theta)
\\
& =  -\frac{\Delta_\ell^\theta}{ \lambda_{k_\ell}(\theta,\hat Z_\ell(\theta))}
\ppt \lambda_{k_\ell}(\theta,\hat Z_\ell(\theta)) - \lambda_{k_\ell}(\theta,\hat Z_\ell(\theta))^{-1} \ppt S^\theta_{k_\ell}(T_\ell^\theta),
\end{split}\end{align}
where the second equality follows from \eqref{eq:9786786}.
Note that  for $t\in\left[T_\ell^\theta,T_{\ell+1}^\theta\right]$ and any $k \in \{1,\dots,K\}$ we have that 
$ S^\theta_k(t) = S^\theta_k(T_\ell^\theta) + \lambda_k(\theta,\hat Z_\ell(\theta))(t-T_\ell^\theta)$.  Thus
\begin{equation}\label{pptT}
\ppt S^\theta_{k}(T_{\ell}^\theta)= \ppt S^\theta_k(T_{\ell-1}^\theta) + \Delta^\theta_{\ell-1}\ppt \lambda_k(\theta,\hat Z_{\ell-1}(\theta)) + \lambda_k(\theta,\hat Z_{\ell-1}(\theta))\ppt \Delta^\theta_{\ell-1}.
\end{equation}
The values $\{\ppt \Delta^\theta_\ell\}$ and $\{\ppt S^\theta_\ell(T_\ell^\theta)\}$  can now be solved for recursively given that  $S^\theta_{k}(T_{0}^\theta)=0$ for all $k$.  

 To find the derivatives of the $T_\ell^\theta$ as in \eqref{pptL}, first note that $\ppt T_0^\theta = 0$, and that for $\ell>0$ the definition of $\Delta_\ell^\theta$ implies that 
\[
\ppt T_\ell^\theta = \sum_{j=0}^{\ell-1} \ppt \Delta_j^\theta.
\]
Let $\ell_a\in\mathbb{N}$ be maximal such that $T^\theta_{\ell_a}\leq a$; that is, the $\ell_a\th$ jump is the last jump to occur before time $a$.
We may now conclude that
\begin{equation}\label{derivs}
\ppt[  T_{\ell+1}^\theta\wedge b -T_{\ell}^\theta\vee a ]^+ = 
\begin{cases}
 0 & \ell< \ell_a \quad \textrm{or} \quad \ell>N
 \\[1ex]
\ppt T_{\ell_a+1}^\theta= \sum_{j=0}^{\ell_a} \ppt\Delta_j^\theta  & \ell = \ell_a
 \\[1ex]
 \ppt\Delta_\ell^\theta & \ell_a< \ell<N
 \\[1ex]
-\ppt T_N^\theta=-\sum_{j=0}^{N-1} \ppt\Delta_j^\theta  & \ell = N
\end{cases},
\end{equation}
which can all be easily computed during numerical simulation.

The derivations above lead to the following algorithm for the generation of $Z_\theta$ over the interval $[0,b]$ and of the random variable $\ppt L_Z(\theta) = \ppt \int_a^b F(\theta,Z_\theta(s))ds$.  The notation in the algorithm provided below is the same as that above with the following exceptions:
\begin{enumerate}[$i$.)]
\item  \textit{flag} is a variable that only takes the values zero or one.  It starts at zero and becomes one once $t\ge a$.   In the algorithm, this moment is determined by finding the first time at which the process makes a jump at a time greater than $a$ (see Step \ref{step:78968976} below).

\item   The output  $\ppt L_Z(\theta)$, as given in \eqref{pptL}, is denoted by $dL$.  
\end{enumerate}

It may be helpful for the reader to note that steps \ref{step:786586745}, \ref{step:8976}, \ref{step:8766},  \ref{step:6}, \ref{step:56474} and \ref{step:34657} make up the usual implementation of the next reaction method \cite{Anderson2007,Gibson2000}.  Only steps \ref{step:5678}, \ref{step:78968976}, \ref{step:8765675}, and \ref{step:9807789} are those required for  the  derivative terms.
All  uniform random variables generated in the algorithm below are assumed to be mutually independent.
\vspace{.125in}

\noindent \textsc{Algorithm.} Numerical derivation of $Z_\theta$ and   $\ppt L_Z(\theta) =  \ppt \int_a^b F(\theta,Z_\theta(s))\, ds$.

\noindent \textbf{Initialize.}    Given: a continuous time Markov chain with jump directions $\zeta_k$, intensities $\lambda_k(\theta,z)$, and initial condition $z_0$.  Set $\ell = 0$, $T_0^\theta = 0$,  $Z_\theta(T^\theta_0) = z_0$, $\ppt T^\theta_0 =0$, and $dL  = 0$.  For each $k\in\{1,\dots,K\}$, set $S_k^\theta(T^\theta_0) = 0$, $\ppt S_k^\theta(T^\theta_0) = 0$.  Set \textit{flag} = 0.
For each $k\in \{1,\dots,K\}$, set $I_k(T^\theta_0) = \ln(1/u_k)$, where $\{u_k\}$ are independent uniform$(0,1)$ random variables.

\vspace{.05in}
Perform the following steps.
\begin{enumerate}
%

\item \label{step:786586745} For all $k\in \{1,\dots,K\}$, calculate $\lambda_k(Z_\theta(T^\theta_\ell))$. Set 
\[
	\Delta_\ell^\theta = \min_k \frac{I_k(T^\theta_\ell)-S_k^\theta(T^\theta_\ell)}{\lambda_k(\theta,Z_\theta(T^\theta_\ell))} \quad \text{ and } \quad j= \underset{k}{\textrm{argmin}} \frac{I_k(T^\theta_\ell)-S_k^\theta(T^\theta_\ell)}{\lambda_k(\theta,Z_\theta(T^\theta_\ell))}.
\]

\item \label{step:8976}
If $T^\theta_\ell + \Delta_\ell^\theta > b$, go to Step \ref{step:9807789}. Otherwise set $T^\theta_{\ell+1} = T^\theta_\ell + \Delta_\ell^\theta$ and continue to Step \ref{step:5678}.

\item \label{step:5678} Set 
\[
	\ppt \Delta_\ell^\theta = -\frac{\Delta_\ell^\theta}{\lambda_j(\theta,Z_\theta(T^\theta_\ell))}\cdot \ppt\lambda_j(\theta,Z_\theta(T^\theta_\ell)) - \frac{\ppt S_j^\theta(T^\theta_\ell)}{\lambda_j(\theta,Z_\theta(T^\theta_\ell))},
	\]
	then set $\ppt T_{\ell+1}^\theta = \ppt T_\ell^\theta + \ppt \Delta_\ell^\theta$.

\item \label{step:78968976} Set 
\[	
	dL \leftarrow dL + \Delta_\ell^\theta\cdot \ppt F(\theta,Z_\theta(T^\theta_\ell)) + F(\theta,Z_\theta(T^\theta_\ell))\cdot A,
	\]
	 where 
	 \[
	 	A = 
\begin{cases}
0 & \text{if } T^\theta_{\ell+1} < a \\
\ppt T_{\ell+1}^\theta & \text{if } T^\theta_{\ell+1} >a \quad \textrm{and} \quad flag=0\\
\ppt \Delta_\ell^\theta & \textrm{otherwise}
\end{cases}.
\]
 If $T^\theta_{\ell+1}>a$ and $flag=0$, set $flag=1$.

\item \label{step:8766} Set $Z_\theta(T^\theta_{\ell+1}) = Z_\theta(T^\theta_\ell) +\zeta_j$.  

\item \label{step:6} For each $k\in\{1,\dots,K\}$, set $S_k^\theta(T^\theta_{\ell+1}) = S_k^\theta(T^\theta_\ell) + \Delta_\ell^\theta \lambda_k(\theta,Z_\theta(T^\theta_\ell))$.

\item  \label{step:8765675} For each $k\in\{1,\dots,K\}$, set 
\[
	\ppt S_k^\theta(T^\theta_{\ell+1})= \ppt S_k^\theta(T^\theta_{\ell}) + \Delta_\ell^\theta\cdot \ppt\lambda_k(\theta,Z_\theta(T^\theta_\ell))+\lambda_k(\theta,Z_\theta(T^\theta_\ell)) \cdot \ppt \Delta_\ell^\theta.
\]

\item \label{step:56474} Set $I_j(T^\theta_{\ell+1}) = I_j(T^\theta_\ell) +  \ln\left(\frac{1}{u}\right)$, where $u$ is a uniform$(0,1)$ random variable.

\item \label{step:34657} Set $\ell \leftarrow \ell+1$ and return to Step \ref{step:786586745}.

\item \label{step:9807789} Set $dL \leftarrow dL + (b-T^\theta_\ell)\ppt F(\theta,Z_\theta(T^\theta_\ell)) - flag\cdot  F(\theta,Z_\theta(T^\theta_\ell))\cdot \ppt T_\ell^\theta$. 
\end{enumerate}

\subsubsection{Validity of pathwise estimators}\label{validity}


Letting $\ztheta$ be a process satisfying a stochastic equation of the form \eqref{eq:pmodel}, we turn to the  question of when $\ppti \E[L_Z(\theta)] = \E[\ppti L_Z(\theta)]$, with $\ppti L_Z(\theta)$ detailed in the previous section.  For our proof of Theorem \ref{thm:pw_first}, we require a condition on the intensity functions of $Z_\theta$ that is more restrictive than Condition \ref{lineargrowth}.

\begin{cond}\label{cond:extra}
Let $\Theta \subset \R^R$.  The functions $\lambda_k:\Theta\times \Z^d\to \R_{\ge 0}$, $k = 1,\dots, K$,  satisfy this condition if each of the following hold.
\begin{enumerate}
\item There exist constants $\Gamma_M, \Gamma'$ such that for all $k\in \{1,\dots,K\}$ and all $z\in\Z^d$, 
\[
	\sup_{\theta\in\Theta}\sup_{z\in\mathcal{S}}\lambda_k(\theta,z) \le \Gamma_M \quad \text{and} \quad \sup_{\theta\in\Theta}\sup_{z\in\mathcal{S}}\bigg|\ppti\lambda_k(\theta,z)\bigg| \leq 
	\Gamma'.
\]
\item There exists some constant $\Gamma_m$ such that for all $k\in \{1,\dots,K\}$ and all $z\in\Z^d$, 
\[
	\sup_{\theta\in\Theta}\lambda_k(\theta,z) \neq 0 \Rightarrow \sup_{\theta\in\Theta}\frac{1}{\lambda_k(\theta,z)} \leq \Gamma_m.
\]

\end{enumerate}
\end{cond} 
The first condition guarantees  that the intensities and their $\theta$-derivatives are uniformly bounded above.  The second condition stipulates that on those $z\in\Z^d$ at which the rates $\lambda_k(\theta,z)$ are not identically zero on  $\Theta$, the rates must be uniformly bounded away from zero.

\begin{thm}\label{thm:pw_first}
Suppose that the process $\ztheta$ satisfies the stochastic equation \eqref{eq:pmodel} with $\lambda_k$ satisfying Conditions \ref{nonint} and \ref{cond:extra} on a neighborhood $\Theta$ of $\theta$.  Suppose that the function $F$ satisfies Condition \ref{Fcond} on $\Theta$.  For some $0 \le a \le b < \infty$, let  $L_Z(\theta) = \int_a^b F(\theta,\zts)\, ds$. Then  $ \frac{\partial}{\partial \theta_i} \E \left[ L_Z(\theta)\right]= \E\left[ \frac{\partial}{\partial \theta_i} L_Z(\theta)\right]$, for all $i \in \{1,\dots,R\}$.
\end{thm}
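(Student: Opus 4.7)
The plan is to establish the interchange by proving $L^1$-convergence of the difference quotient
\[
    \frac{L_Z(\theta + h e_i) - L_Z(\theta)}{h} \longrightarrow \ppti L_Z(\theta)
\]
as $h \to 0$. Almost sure convergence will come from the algorithmic derivation of Section \ref{sec:pwest}, so the remaining task is to produce an integrable dominator. By the mean value theorem, the difference quotient equals $\ppti L_Z(\theta + \eta h e_i)$ for some (random) $\eta \in (0,1)$, so it suffices to exhibit an integrable $G$ with $\sup_{\theta' \in \Theta_0} |\ppti L_Z(\theta')| \le G$ almost surely on a small neighborhood $\Theta_0 \subset \Theta$ of $\theta$.

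I would first invoke the non-interruptive Condition \ref{nonint} to show that, having fixed a realization of the driving Poisson processes $\{Y_k\}$, the sequence of fired reactions and the embedded skeleton $\{\hat Z_\ell(\theta')\}_{\ell=0}^{N(\theta',b)}$ are almost surely locally constant in $\theta'$. The reasoning is that when reaction $k_\ell$ fires at state $\hat Z_\ell(\theta')$, Condition \ref{nonint} forbids any other positive rate from being extinguished, so the internal-clock argmin characterization \eqref{eq:9786786} of $k_\ell$ is determined by strict inequalities (away from a measure-zero set of $\theta'$ where ties among internal clocks occur). Consequently only the jump times $T_\ell^{\theta'}$ vary smoothly with $\theta'$, and $L_Z(\cdot)$ is almost surely continuously differentiable in a neighborhood of $\theta$ with derivative given by formula \eqref{pptL}.

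Next I would estimate $|\ppti L_Z(\theta')|$ by unfolding the recursions \eqref{pptDelta}--\eqref{pptT}. Condition \ref{cond:extra} supplies uniform upper bounds on $|\ppti \lambda_k|$ (by $\Gamma'$) and on $1/\lambda_{k_\ell}$ on its support (by $\Gamma_m$), so a straightforward induction on $\ell$ yields
\[
    \sup_{\theta' \in \Theta_0}\Bigl(|\ppti\Delta_\ell^{\theta'}| + \max_k |\ppti S_k^{\theta'}(T_\ell^{\theta'})|\Bigr) \le C_1 \bigl(1 + N(\theta',b)\bigr)^{q_1}
\]
for constants $C_1,q_1$ depending only on $b, \Gamma_m, \Gamma', \Gamma_M$. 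Condition \ref{Fcond} bounds $|F|$ and $|\ppti F|$ polynomially in $\|\hat Z_\ell\|$, and since $\sup_{s\le b}\|Z_{\theta'}(s)\| \le \|Z_{\theta'}(0)\| + \max_k \|\zeta_k\|\, N(\theta',b)$, combining with \eqref{pptL} and \eqref{derivs} eventually gives
\[
    \sup_{\theta' \in \Theta_0}|\ppti L_Z(\theta')| \le C_2 \bigl(1 + N(\theta',b)\bigr)^{q_2}
\]
for deterministic $C_2,q_2$.

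The main obstacle, and the reason Condition \ref{cond:extra} is strictly stronger than Condition \ref{lineargrowth}, is controlling $N(\theta',b)$ uniformly over $\theta' \in \Theta_0$. The uniform upper bound $\lambda_k(\theta',z) \le \Gamma_M$ in Condition \ref{cond:extra}(1) permits a pathwise coupling of $N(\theta',b)$ to a single Poisson random variable $Y(K\Gamma_M b)$, which has finite moments of every order. This furnishes the integrable dominator $G := C_2\bigl(1 + Y(K\Gamma_M b)\bigr)^{q_2}$ valid simultaneously for all $\theta' \in \Theta_0$, and the dominated convergence theorem then delivers both the differentiability of $\theta_i \mapsto \E[L_Z(\theta)]$ at $\theta$ and its equality with $\E[\ppti L_Z(\theta)]$.
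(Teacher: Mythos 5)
Your overall strategy is the same as the paper's: dominate the difference quotient by $\sup_{\theta'}|\ppti L_Z(\theta')|$, bound that supremum via the recursions \eqref{pptDelta}--\eqref{pptT} using Condition \ref{cond:extra}, and control $N(\theta',b)$ uniformly by a single Poisson variable with rate $K\Gamma_M b$ (this is the paper's Lemma \ref{moments}). The genuine gap is in the step where you assert that the embedded skeleton is ``almost surely locally constant in $\theta'$'' and hence that $L_Z$ is a.s.\ continuously differentiable near $\theta$. On any fixed neighborhood $\Theta_0$ the skeleton is \emph{not} a.s.\ constant: as $\theta'$ varies, jump times move, two jumps can swap order, and a jump can cross $a$ or $b$, and each such event changes the embedded chain and creates a point where $L_Z$ is not differentiable. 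For your mean-value-theorem domination to be valid you must first prove that $L_Z$ is \emph{continuous} in $\theta'$ across these exceptional parameter values; this is the content of the paper's Lemma \ref{lem:ascont} and is precisely where Condition \ref{nonint} does its real work (when two reactions swap order, non-interruptivity guarantees both can fire in either order, the net displacement $\zeta_k+\zeta_m$ is the same, and the intermediate state's contribution $F(\theta,\hat Z_\ell)[T_{\ell+1}^\theta\wedge b - T_\ell^\theta\vee a]^+$ vanishes at the crossing because $T_{\ell+1}^{\theta^*}=T_\ell^{\theta^*}$). Without this argument your domination is unjustified --- and indeed it is false for interruptive systems, where $L_Z$ has genuine jump discontinuities in $\theta$ and the interchange fails. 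The paper additionally has to confront the fact that continuity is only established when at most one change to the skeleton occurs on $(\theta-h,\theta+h)$: it introduces the random radius $\tilde h$, shows $\P(\tilde h\le h)=O(h^2)$, and disposes of the complementary event by Cauchy--Schwarz. Your write-up has no analogue of this second piece.

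A smaller quantitative slip: unfolding \eqref{pptDelta}--\eqref{pptT} does not give a bound polynomial in $N(\theta',b)$. Each step multiplies the accumulated error by roughly $2\Gamma_M\Gamma_m$, so the correct bound on $|\ppti\Delta_\ell^\theta|$ and $|\ppti S_k^\theta(T_\ell^\theta)|$ is of order $(2\Gamma_M\Gamma_m)^\ell$, exponential in the jump count (the paper's Lemma \ref{lem:ddelbound} and its corollary). This does not sink the argument, because a Poisson random variable has finite exponential moments ($\E[c^{\hat N}]<\infty$ for every $c$), but the ``straightforward induction'' yielding $C_1(1+N)^{q_1}$ as stated is not correct and the final dominator must be taken of the form $C\,(1+\sup_\theta\sup_{s\le b}\Vert Z_\theta(s)\Vert^{p})\,N^2 (2\Gamma_M\Gamma_m)^N$, handled with Cauchy--Schwarz and the exponential-moment bound.
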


The proof of this theorem is similar to that found in \cite{Glasserman1990} and can be found in Appendix \ref{app:PE}.  We believe that the stringent Condition \ref{cond:extra} can be replaced by the more relaxed Condition \ref{lineargrowth}, though this remains open.  The stricter Condition \ref{cond:extra} plays little role in the  methods developed here as it can be incorporated into the definition of the process $\ztheta$, as will be seen in Section \ref{sec:hyb}.  In particular, we note that we will not be requiring that our actual process of interest, $\xtheta$, satisfies Condition \ref{cond:extra}, only that the approximate process, $\ztheta$, does.

\subsection{Likelihood ratios and coupled paths}\label{sec:lrest}

The likelihood ratio (LR) method for sensitivity estimation proceeds by selecting a realization of a given process according to a $\theta$-dependent probability measure.  Differentiation of the probability measure is then carried out within the expectation.
For CTMC models $\xtheta$ as in \eqref{eq:pmodel} that have $\theta$-differentiable intensities and that satisfy the growth Condition \ref{lineargrowth} (which, recall, is nearly all biochemical systems), and for a large class of functionals $f$ we have 
\begin{equation}\label{eq:lr}
 \ppti \E [f(\theta,\xtheta)] = \E \left[ \ppti f(\theta,\xtheta) + f(\theta,\xtheta)  H_i(\theta,T) \right]
\end{equation}
  where 
  \begin{equation}\label{eq:H}
H_i(\theta,T) =  \sum_{\ell=0}^{N(T)-1} \frac{\ppti \lambda_{k_\ell}(\theta,\hat X_\ell(\theta)) }{\lambda_{k_\ell}(\theta,\hat X_\ell(\theta)) } - \sum_{k=1}^K \int_0^T \ppti \lambda_{k}(\theta,\xts)ds,
\end{equation}
and where 
\begin{itemize}
\item $N(T)$ is the total number of jumps of $\xtheta$ through time $T$, and a sum of the form $\sum_{\ell = 0}^{-1}$ is set to zero,
\item $k_\ell$ is the index of the reaction that changes the system from the $\ell$th state to the $(\ell+1)$st state,
\item $\hat X_\ell(\theta)$ is the $\ell\th$ state in the embedded discrete chain of the path.  
\end{itemize}
For a system \eqref{eq:pmodel} with intensities of the form $\lambda_k(\theta,x) = \theta_k g_k(x)$, where $g_k:\Z^d \to \R_{\ge 0}$, such as stochastic mass action kinetics, $H_i$ simplifies to
\begin{equation*}
H_i(\theta,T) =  \frac{1}{\theta_{i} }\left( N_{i}(T) - \int_0^T \lambda_{i}(\theta,\xts) ds \right) 
\end{equation*}
where $N_{i}(T)$ is the number of jumps of reaction $i$ by time $T$.  
 See  \cite{GlynnAsmussen2007,Glynn1990,PlyasunovArkin}.

The random variable $H_i(\theta,T)$ is often known as a weighting function or weight, and is simple to compute during path simulation.  The likelihood ratio method is  widely applicable, straightforward to use, and provides an unbiased estimate of the sensitivity.  However, the variance of the estimate is often prohibitively large, leading to an inefficient method.  One can reduce this variance significantly by using the weight as a control variate (see e.g. Section V.2 of \cite{GlynnAsmussen2007}), since $H_i(\theta,\cdot)$ is often a mean zero martingale \cite{AndersonKurtzBook}.  
 
\subsubsection{The LR method applied to coupled paths}\label{sec:LR_coupled}
As was pointed out in and around \eqref{eq:2408957},  we want to apply the likelihood ratio method to estimate the sensitivity $\ppti\E [ L_X(\theta) - L_Z (\theta) ]$ where $X$ and $Z$ are coupled processes.  Assume that $\xtheta$ and $\ztheta$ have the same jump directions $\zeta_k\in \Z^d$, but different intensity functions.  Denote their respective intensity functions by $\lambda_k^X$ and $\lambda_k^Z$.  It may happen that $\xtheta$ and $\ztheta$ have different natural state spaces.  In particular, the most common application will have $\xtt \in \Z^d_{\ge 0}$ while $\ztt \in \Z^d$.  Therefore, we simply take the domains of $\lambda_k^X$ and $\lambda_k^Z$ to be the union of the two; for example, all of $\Z^d$.  If the natural domain of either intensity function is some subset of $\Z^d$, then that function will need to be extended to this larger domain in some reasonable fashion. For example, since the natural domain of $\lambda_k^X$ is often $\Z^d_{\ge 0}$, we may extend each $\lambda_k^X$ to be identically zero outside of the non-negative orthant. 
  
To proceed we must couple the process $\xtheta$ and $\ztheta$; i.e. we must build them on the same probability space.  We will use the split coupling, which first appeared in \cite{Kurtz1982} and has since appeared in numerous publications related to computational methods \cite{Anderson2012,AET2014,AH2012,AK2014,GuptaKhammash,WolfAnderson2012}.  We take 
\[
	W_\theta(t) := \left[ \begin{array}{c} \xtt \\ \ztt \end{array}\right]
\]
to be the family of processes satisfying the stochastic equation
\begin{align}\label{eq:coup}
\begin{split}
\xtt &= X_\theta(0) +  \sum_{k=1}^K Y_{k,1}\left(\int_0^t \lambda_{k}^{X}(\theta,\xts)\wedge\lambda_{k}^{Z}(\theta,\zts)ds\right)\zeta_k
\\ & \hspace{1.38cm} + Y_{k,2}\left(\int_0^t \lambda_{k}^{X}(\theta,\xts) - \lambda_{k}^{X}(\theta,\xts)\wedge\lambda_{k}^{Z}(\theta,\zts)ds\right) \zeta_k,\\
\ztt &= Z_\theta(0) +  \sum_{k=1}^K Y_{k,1}\left(\int_0^t \lambda_{k}^{X}(\theta,\xts)\wedge\lambda_{k}^{Z}(\zts)ds\right)\zeta_k
\\ & \hspace{1.38cm} + Y_{k,3}\left(\int_0^t \lambda_{k}^{Z}(\theta,\zts) - \lambda_{k}^{X}(\theta,\xts)\wedge\lambda_{k}^{Z}(\theta,\zts)ds \right) \zeta_k,
\end{split}
\end{align}
where  $\{Y_{k,1},Y_{k,2},Y_{k,3}\}$ are independent unit-rate Poisson processes and we recall that $a\wedge b = \min(a,b)$ for any $a,b\in \R$.
Note that the $2d$-dimensional process $W_\theta(t)$ is also a CTMC.  
For each $k\in \{1,\dots,K\}$ the reaction of the system \eqref{eq:pmodel} with reaction vector $\zeta_k\in\Z^d$ has been associated with three reactions of the process $W_\theta$.  The reaction vectors for these three reactions, which are elements of $\Z^{2d}$, are
\[\eta_{k,1} = \left[ \begin{array}{c} \zeta_k \\ \zeta_k \end{array}\right], \;
\eta_{k,2}= \left[ \begin{array}{c} \zeta_k \\ 0 \end{array}\right], \;
\eta_{k,3}= \left[ \begin{array}{c} 0 \\ \zeta_k \end{array}\right],
\] where each $0$ is interpreted as $\vec 0\in \Z^d$.  Letting $w = \binom{x}{z} \in \Z^{2d}$,
where $x,z\in \Z^d$, the intensity functions for the three reactions are
\begin{align}\begin{split}\label{Lrates}
 \Lambda_{k,1}(\theta,w) &= \lambda_{k}^{X}(\theta,x)\wedge\lambda_{k}^{Z}(\theta,z),\\
\Lambda_{k,2}(\theta,w) &= \lambda_{k}^{X}(\theta,x) - \lambda_{k}^{X}(\theta,x)\wedge\lambda_{k}^{Z}(\theta,z),\\
\Lambda_{k,3}(\theta,w) &= \lambda_{k}^{Z}(\theta,z) - \lambda_{k}^{X}(\theta,x)\wedge\lambda_{k}^{Z}(\theta,z).
\end{split}\end{align}

\noindent We say a reaction associated with $W_\theta$  is of \textit{type} $j\in\{1,2,3\}$ if the reaction vector is $\eta_{k,j}$.
Now note that
\[ W_\theta(t) = W_\theta(0) + \sum_{j=1}^3 \sum_{k=1}^K Y_{k,j} \left(\int_0^t \Lambda_{k,j}(\theta,W_\theta(s)) \, ds\right) \eta_{k,j}
\] 
has the same general form as \eqref{eq:pmodel}.  Thus, as long as the rates satisfy the usual mild regularity conditions, 
we may use the likelihood method as in \eqref{eq:lr}--\eqref{eq:H}.  Given some function
$\tilde f: \R^R\times D_{\Z^{2d}}[0,\infty) \to \R$, the analogous equations are
\begin{equation}\label{eq:coupledlr}
 \ppti \E [\tilde f(\theta,W_{\theta})] = \E \left[ \ppti \tilde f(\theta,W_\theta) + \tilde f(\theta,W_\theta)  \tilde H_i(\theta,T) \right]
\end{equation}
  where 
\[
\tilde H_i(\theta,T) =  \sum_{\ell=0}^{\tilde N(T)-1} \frac{\ppti \Lambda_{k_\ell,j_\ell}(\theta,\hat W_\ell(\theta)) }{\Lambda_{k_\ell,j_\ell}(\theta,\hat W_\ell(\theta)) } - \sum_{j=1}^3\sum_{k=1}^K \int_0^t \ppti \Lambda_{k,j}(\theta,W_{\theta}(s))\, ds , 
\]
and where
\begin{itemize}
\item  $\tilde N(T)$ is the total number of jumps of $W(\theta)$ through time $T$, 
\item $k_\ell\in\{1,\dots,K\}$ is the index and $j_\ell\in\{1,2,3\}$ is the type of the reaction that changes $W_\theta$ from the $\ell$th state to the $(\ell+1$)st state, 
\item $\hat W_\ell(\theta)$ is the $\ell\th$ state in the embedded discrete chain of the path of $W_{\theta}$, with enumeration starting at $\ell = 0$.
\end{itemize}

For a system in which $\Lambda_{i,j}(\theta,w) = \theta_k g_{i,j}(w)$, $\tilde H_i$ simplifies to
\begin{equation*}
\tilde H_i(\theta,T) = \sum_{j=1}^3\left[  \frac{1}{\theta_i}\left( \tilde N_{i,j}(T) - \int_0^T \Lambda_{i,j}(\theta,W_\theta(s))\, ds \right) \right],
\end{equation*}
where $\tilde N_{i,j}(T)$ is the number of jumps of reaction $i$ of type $j$ by time $T$.

We return to our problem at hand of estimating 
\[
	\ppti \E [L_X(\theta)-L_Z(\theta)] = \ppti \E \left[ \int_a^b F(\theta,\xts) - F(\theta,\zts)\; ds \right].
\]
Using \eqref{eq:coupledlr} with $\tilde f (\theta,W_\theta) = \int_a^b [ F(\theta,\xts) - F(\theta,\zts)] ds$, we see that, so long as the differentiation is valid, $\ppti \E[L_X(\theta) - L_Z(\theta)] =\E [V(\theta)]$ with 
\begin{align}\label{lrest}
V(\theta):= \int_a^b \left( \ppti F(\theta,\xts) - \ppti F(\theta,\zts)\right) ds + \tilde H_i(\theta,b)\int_a^b [ F(\theta,\xts) - F(\theta,\zts)] ds,
\end{align}
where the partial of $F$ is always with respect to the first variable.

\subsubsection{Requirements for the process $\ztheta$.}
So long as  the rates of both $\xtheta$ and $\ztheta$ are differentiable, the new rates \eqref{Lrates} for the coupled process are piecewise differentiable.  However, because the intensities $\Lambda_{k,j}$ involve minima, there may be values of $\theta$ and $w$ where the derivative does not exist.  In particular, this may occur if, for some $k$, the two rates in the minimum 
$ \lambda^X_k(\theta,x) \wedge 
\lambda^Z_k(\theta,z)$ are equal, since at such points the left- and right-hand derivatives may be different.

The following condition ensures the differentiability of each $\Lambda_{k,j}$. 
\begin{cond}\label{coupledlrcond}
Suppose for some $k\in\{1,\dots,K\}$ and some $w = \binom{x}{z}$ in the state space of $W$ we have that $\lambda^X_k(\theta,x)=\lambda^Z_k(\theta,z)$.  Then we require that $\ppti\lambda^Z_k(\theta,z)=\ppti\lambda^X_k(\theta,x)$ for each $i\in\{1,\dots,R\}$. 
\end{cond}

\section{The hybrid pathwise method}\label{sec:hyb}

\subsection{ Putting it all together}

Developing  hybrid pathwise methods is now straightforward.  We will estimate $\nabla_\theta \E[L_X(\theta)]$ using \eqref{eq:2408957} for an appropriately chosen process $\ztheta$.  In Section \ref{sec:pwbackground}, we detailed the main conditions that $\ztheta$ must satisfy for this procedure to work.  Specifically, we need a $\ztheta$ that is tightly coupled with $\xtheta$, that satisfies the non-interruptive Condition \ref{nonint}, and that satisfies the regularity Conditions \ref{cond:extra} and \ref{coupledlrcond}.  
We also require that $F$, which determines $L$ via \eqref{eq:409857}, satisfies Condition \ref{Fcond}. Finally, for the validity of the likelihood ratio method on the error term, we require that $\xtheta$ satisfies Condition \ref{lineargrowth}. The hybrid pathwise method then proceeds by 
\begin{enumerate}
\item estimating $\nabla_\theta \E[L_X(\theta) - L_Z(\theta)]$ via Monte Carlo using the LR method as detailed in Section \ref{sec:LR_coupled}, and 
\item estimating $\nabla_\theta \E[L_Z(\theta)]$ via Monte Carlo using the pathwise method as detailed in Section \ref{sec:pwest}.
\end{enumerate}
Denoting by $Q_{X-Z}$ and $Q_{Z}$ the two estimators detailed above, our final estimate for $\nabla_\theta \E[L_X(\theta)]$ is taken to be
\begin{equation}\label{eq:240985999}
	Q_{X}:= Q_{X-Z} + Q_{Z},
\end{equation}
which is trivially unbiased.
We will generate paths independently, in which case
\begin{equation}\label{eq:8776}
	\textup{Var}(Q_X) = \textup{Var}(Q_{X-Z}) + \textup{Var}(Q_Z),
\end{equation}
which can be estimated and used for confidence intervals in the usual way.

Any $\ztheta$ satisfying the above conditions may be used.  In order to make  specific suggestions, we now restrict ourselves to the setting of biochemistry where, as detailed in the introduction, $\zeta_k = \nu_k' - \nu_k$ and the natural state space of $\xtheta$ is $\Z^d_{\ge 0}$.  We will consider two cases:  when $\lambda_k^X$ satisfies stochastic mass action kinetics and when $\lambda_k^X$ satisfies Michaelis--Menten kinetics.

\vspace{.125in}

\noindent \textbf{Stochastic mass action kinetics.}
Suppose that $\lambda_k^X(\theta,x)$ satisfies stochastic mass action kinetics \eqref{eq:mass-action}, in which case  $\lambda^X_k(\theta,x) = \theta_k g_k(x)$.  We define $\lambda_k^X(\theta,x) = 0$ if $x \notin \Z^d_{\ge 0}$.

We now define $\ztheta$ to be the process satisfying \eqref{eq:pmodel} with the following intensity functions.  
For each $k \in \{1,\dots,K\}$ let $\delta_k>0$.  Let $M>0$ be a large number.  Define 
\begin{equation}\label{makz}
 \lambda_k^Z(\theta,z) =  
\begin{cases}
\theta_k\delta_k
 &  \textrm{if}\;\; z_i<\nu_{ki} \; \textrm{for any \textit{i} such that} \; \nu_{ki}>0
\\
\theta_k M &  \textrm{if}\;\; \lambda^X_k(\theta,z) \geq \theta_k M
\\
\lambda^X_k(\theta,z) & \textrm{otherwise}
\end{cases}.
\end{equation}
 Note that in much of $\Z^d_{\ge 0}$ the rates of $Z_\theta$ are identical to those of $\xtheta$.  Note also that $\ztheta$ satisfies the non-interruptive Condition \ref{nonint}, the restrictive regularity Condition \ref{cond:extra}, and the Condition \ref{coupledlrcond} guaranteeing the applicability of the LR method on the coupled processes.  The redefinition of the intensity functions for large values of $\lambda_k^X(\theta,z)$ (by $\theta_kM$) is a consequence of our theoretical results.  If Theorem \ref{thm:pw_first} can be proven with Condition \ref{cond:extra} replaced by Condition \ref{lineargrowth}, as we believe is possible, then the $M$ term could be ignored and we would have
\begin{equation*}
 \lambda_k^Z(\theta,z) =  
\begin{cases}
\theta_k\delta_k
 &  \textrm{if}\;\; z_i<\nu_{ki} \; \textrm{for any \textit{i} such that} \; \nu_{ki}>0
\\
\lambda^X_k(\theta,z) & \textrm{otherwise}
\end{cases}.
\end{equation*}

 \vspace{.125in}
 \noindent \textbf{Michaelis--Menten Kinetics.}  The standard Michaelis--Menten rate is of the form $\lambda^X_k(\theta,x) = \frac{\theta_1 x_k}{\theta_2 + x_k}$  \cite{PetzoldMM}.  Note that near a fixed $\theta$ this rate is  uniformly bounded in $x\ge0$. For some $\delta_k>0$ let
\begin{equation}\label{mmz}
 \lambda_k^Z(\theta,z) =  
\begin{cases}
\frac{\theta_1 \delta_k}{\theta_2 + \delta_k}
 &  \textrm{if}\;\; z_i<\nu_{ki} \; \textrm{for any \textit{i} such that} \; \nu_{ki}>0
\\
\lambda^X_k(\theta,z) & \textrm{otherwise}.
\end{cases}
\end{equation}

\noindent Note that (i)  $\ztheta$ so defined will again have rates that are in agreement with $\xtheta$ for much of $\Z^d_{\ge 0}$, and (ii)  $\ztheta$ satisfies all the conditions outlined above, including the non-interruptive Condition \ref{nonint}.

\vspace{.125in}

It is important to note that the processes $\ztheta$ defined in the manner of  \eqref{makz} or \eqref{mmz} can reach states with negative coordinates, even if the initial condition $Z_\theta(0)$ is in $\Z^d_{\ge 0}$.  This is a consequence of how we overcame the problem that, in general, biochemical processes do not satisfy the non interruptive condition \ref{nonint}.

\subsection{Implementation issues}\label{sec:issues}

In this short section, we make a few points about implementing the hybrid pathwise method.  
\begin{enumerate}

\item \label{implnonint} In the previous section, we were  conservative in redefining \textit{all} intensity functions so that they can never become zero.  However, if a reaction cannot be interrupted by another, then there is no need to redefine the kinetics at zero.  Allowing such intensities to become zero will then improve the performance of the method.  For example, see the model in Section \ref{ex:switch}.

In particular, if the process $\xtheta$ already satisfies the non-interruptive Condition \ref{nonint} and the restrictive Condition \ref{cond:extra}, then the approximate process $Z_\theta$ is unnecessary: one can use pathwise estimates alone to estimate $\ppti\E [L_X(\theta)]$. See Section \ref{ex:bd} for such an example.

\item The best choice for the $\delta_k$ of \eqref{makz} and \eqref{mmz} will be  model-dependent.  
If $\delta_k$ is too large, the process $Z_\theta$ may cease to be a good approximation of $\xtheta$, which will cause the variance of the likelihood ratio estimate of $\ppti \E[L_X(\theta)-L_Z(\theta)]$ to be large. 
On the other hand, taking $\delta_k$  too small makes it very rare that the process $Z_\theta$ makes a jump that the process $\xtheta$ cannot make.  In this latter case, the problem of estimating $\ppti \E[L_X(\theta) - L_Z(\theta)]$ becomes  a problem of estimating a rare event.

In our  numerical experiments, we found   that taking $\delta_k$ to be near one  was a reasonable choice for all the models we considered. Additionally, we have found that $M$ can be taken arbitrarily large with no loss of accuracy.

\item
If the sensitivity we wish to estimate is of the form $\ppti \E [f(X_\theta(T))]$, i.e.~is not an integral of a function, we may instead write
\begin{align}\label{relatedeq}
\ppti\E [f(X_\theta(T))] &= \ppti\E [ f(X_\theta(T))-f(Z_\theta(T)) ] + \ppti \E [f(Z_\theta(T))],
\end{align}
and note that the LR method is applicable on the first term on the right-hand side of the above equation.  That is, there is no reason to replace $f$ in that term with an integrated function.  The final term must be estimated using either the  GS smoothing method or the RPD smoothing method.
We shall refer to these hybrid procedures for estimating $\ppti\E [f(X_\theta(T))]$ as the \textbf{GS hybrid} and the \textbf{RPD hybrid} methods, respectively.

\item \label{opt}  One must decide how many simulated paths will be used for each of the estimators $Q_{X-Z}$ and $Q_Z$ of 
\eqref{eq:240985999}.  Suppose one wishes to minimize the expected time required to compute an estimate such that the $95\%$ halfwidth is within some target value, $\epsilon$.  That is, we would like 
\begin{equation}\label{eq:constraint}
	\textup{Var}(Q_{X-Z}) + \textup{Var}(Q_Z) = \textup{Var}(Q_X)  \le \delta := \left(\frac{\epsilon}{1.96}\right)^2,
\end{equation}
where $\delta$ denotes the target variance.
Let $v_\ell$ denote the variance $\textup{Var}(V(\theta))$, where $V(\theta)$ is as in \eqref{lrest}, so that $\textup{Var}(Q_{X-Z}) = \frac{v_\ell}{n_\ell}$, where $n_\ell$ is the number of coupled paths simulated.  Also let $c_\ell$ denote the average time required to compute one pair of coupled paths for the likelihood estimate.  Similarly define $v_p$, $c_p$, and $n_p$ for the pathwise estimates.  Then, we wish to minimize the expected total computational time 
$$ 
	n_\ell c_\ell + n_p  c_p = \frac{v_\ell c_\ell}{\textup{Var}(Q_{X-Z})} + \frac{v_p c_p}{\textup{Var}(Q_{Z})}
$$
subject to the constraint \eqref{eq:constraint}.
The solution to this optimization problem satisfies  
\begin{equation}\label{eq:optvar} 
	 \textup{Var}(Q_{X-Z}) = \frac{\delta\sqrt{v_\ell c_\ell}}{\sqrt{v_pc_p}+\sqrt{v_\ell c_\ell}} \quad \text{and} \quad	\textup{Var}(Q_{Z}) = \frac{\delta\sqrt{v_pc_p}}{\sqrt{v_pc_p}+\sqrt{v_\ell c_\ell}}.
\end{equation}
In practice, one may use the following optimization procedure.  First, in a preliminary simulation compute $n$ samples each of   $(X_\theta,Z_\theta)$ and $Z_\theta$.  
Second, from these preliminary samples estimate each of  $v_p, c_p, v_\ell,$ and $c_\ell$ and utilize these values to estimate the target variances \eqref{eq:optvar}.   

\item Finally, we point out that if one first simulates many paths of $Z_\theta$ for use in the pathwise estimate $Q_Z$ and notes that each path is a valid realization of the original process $\xtheta$ (which is simple to check as simulation occurs), then with high probability one knows without further computation that $Q_{X-Z}$ is zero or near zero.  Of course, theoretical work is needed to quantify what is meant by ``high probability'' in the previous sentence.  However, this observation provides a means to check for practical applicability of  pathwise methods, which have been shown to be extremely efficient on many models \cite{Khammash2012}.

\end{enumerate}

\section{Numerical Examples}\label{sec:hybex}

With the examples in this section, we demonstrate the validity and efficiency of our new class of  methods.    An important example is given in Section \ref{ex:switch}, where we demonstrate that pathwise-only methods of the type developed in  \cite{Khammash2012} can fail, in the sense that there can be large biases, if  interruptions can occur.  That is, the example in Section \ref{ex:switch} shows that the error term utilized in this paper, and differentiated using the LR method,  is necessary.

On a variety of examples we compare the efficiency of  the developed methods with  the following:
\begin{enumerate}
\item The likelihood ratio method including the weight \eqref{eq:H} as a control variate (LR+CV).
\item The regularized pathwise derivative method (RPD).
\item The coupled finite difference method (CFD)  using  centered differences.
\item The Poisson path approximation method (PPA).
\end{enumerate}
We will demonstrate that the new methods introduced here compare quite favorably with this group of already established methods, with the GS hybrid method often the most efficient unbiased method.
Future work will involve a wider numerical study to help determine a better framework for choosing the most efficient method for a given model.

Throughout, we use the term ``variance'' to refer to estimator variance, which is the sample variance divided by the number of paths simulated.   For each hybrid method estimate, we use the optimization procedure described in item \ref{opt} of Section \ref{sec:issues}, and compute the variance as in \eqref{eq:8776}. All half-widths given are $95\%$ confidence intervals computed as $1.96$ multiplied by the square root of the variance.  The numerical calculations were carried out in MATLAB using an Intel i5-4570 3.2 GHz quad-core processor.

\subsection{Birth-death}\label{ex:bd}

Consider the birth-death model 
\[ \emptyset \underset{\theta_2}{\overset{\theta_1}{\rightleftarrows}} A\]
with mass action kinetics.  We let  $\xtt$ denote the abundance of  $A$ at time $t$ and take $X_\theta(0)=0$.  For this model, we can solve to find that 
\[\E[ \xtt ]= \frac{\theta_1}{\theta_2} ( 1 - e^{-\theta_2 t} ), \]
and
\[ \frac{\partial}{\partial\theta_1} \E[ \xtt]
= \frac{1}{\theta_2} ( 1 - e^{-\theta_2 t} )
\quad \textrm{and} \quad
\frac{\partial}{\partial\theta_2} \E [\xtt]
= \frac{\theta_1}{\theta_2} ( t e^{-\theta_2 t} ) 
- \frac{\theta_1}{\theta_2^2} ( 1 - e^{-\theta_2 t} ).
\]
We estimate the sensitivity with respect to $\theta_2$  of the quantity $\E [\xtt]$ at  $\theta_0=(\theta_1,\theta_2)=(10,0.5)$. 

Since the model naturally satisfies Condition \ref{nonint} we may use the GS Pathwise and RPD methods without the error terms; see item \ref{implnonint} of Section \ref{sec:issues}.  Though the intensity of the model is unbounded, the intensities are ``bounded in practice:'' throughout these simulations no intensity was ever greater than $M=10^3$.  That is, if we had used the full hybrid method with an approximate process $Z_\theta$ with an intensity bounded above by $10^3,$ then the error term would have given us an estimate of zero.  We may therefore confidently use both pathwise-only methods. 

 Figure \ref{fig:bdestvar} shows that each method does a good job of estimating the given sensitivities and that the GS pathwise method has the lowest variance of any unbiased method. In fact, for this experiment the GS pathwise method also has a smaller variance than most of the biased methods.
The RPD method, with the larger choice of $w$, has a slightly lower variance than the GS pathwise method.

\begin{figure}
\centering
\textbf{Comparison of $\theta_2$ sensitivity estimates, Birth-Death model}
\vskip1ex
$\begin{array}{ll}
\includegraphics[width=3in]{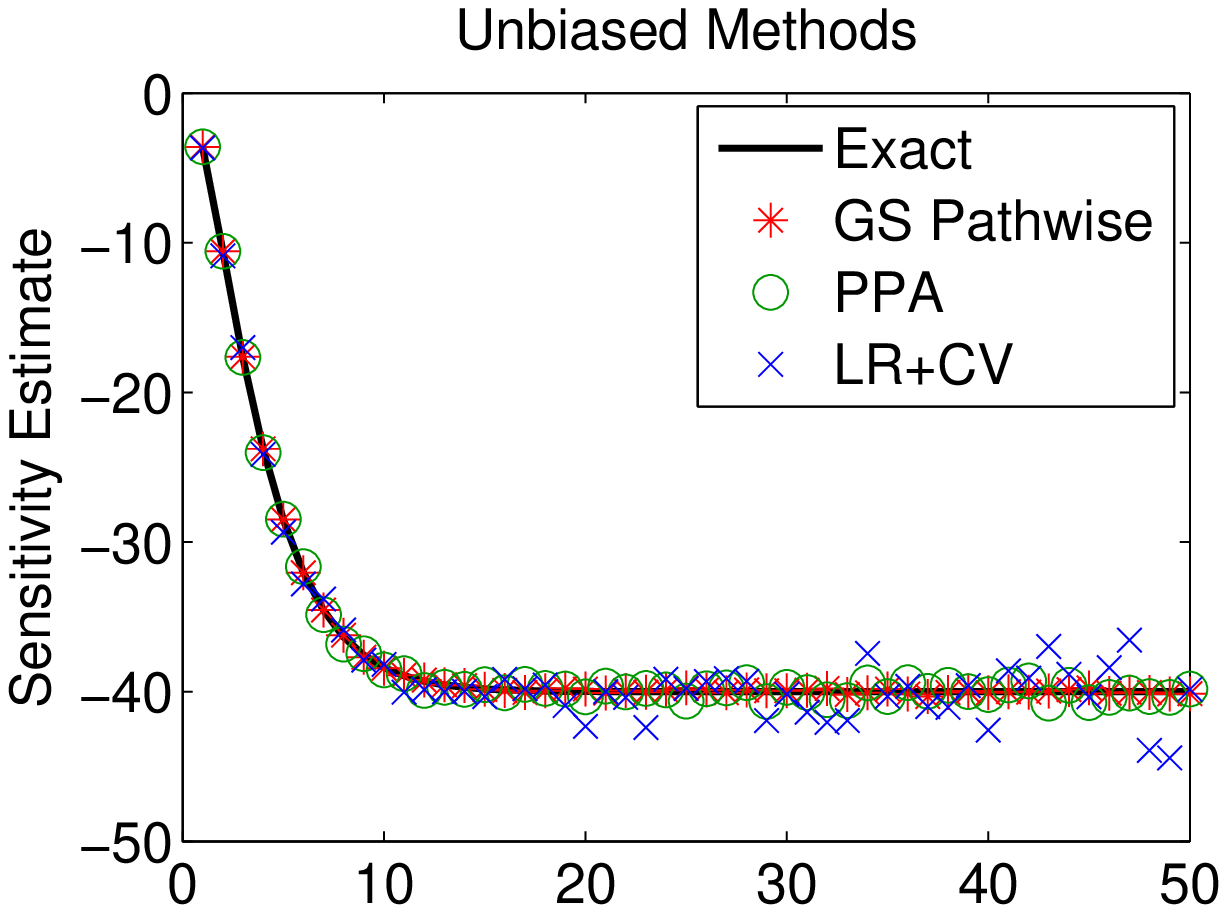} &  \includegraphics[width=3in]{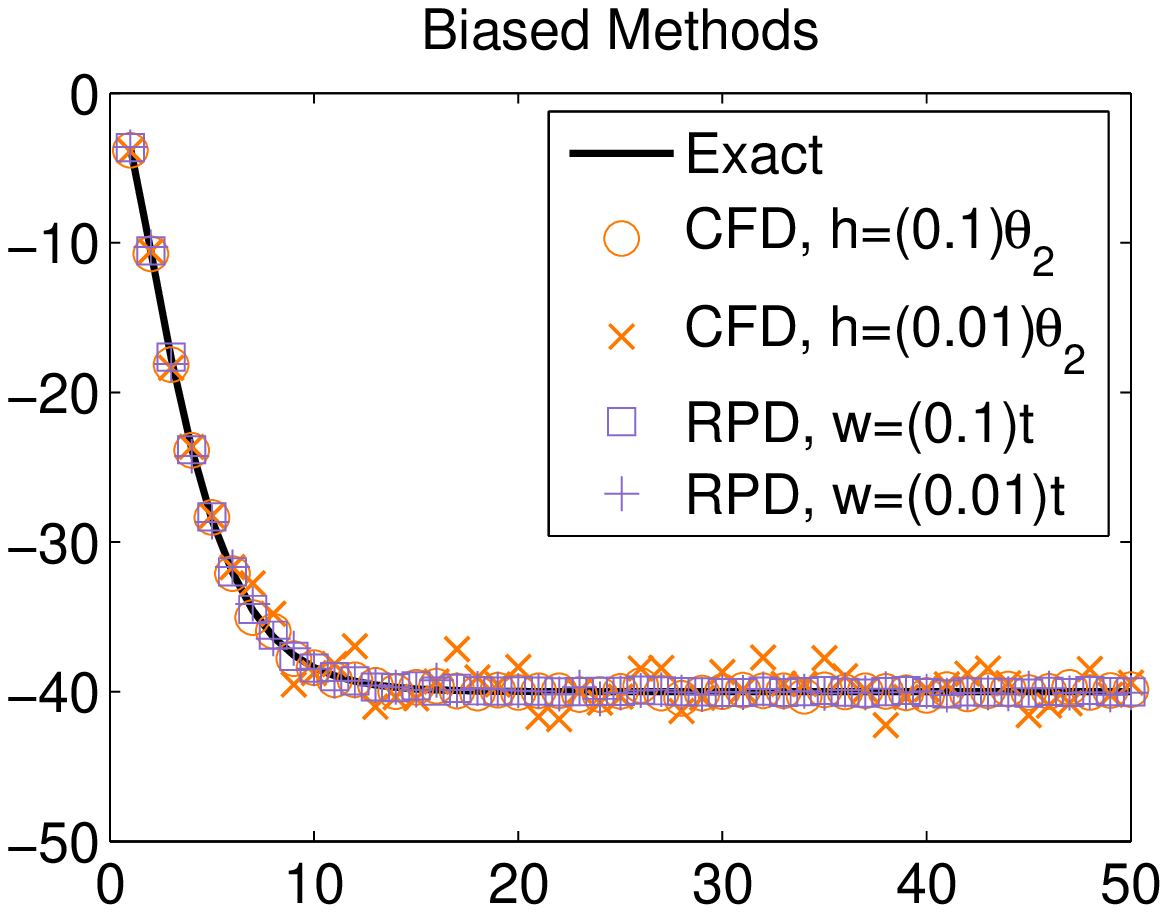} 
\\
\includegraphics[width=3in]{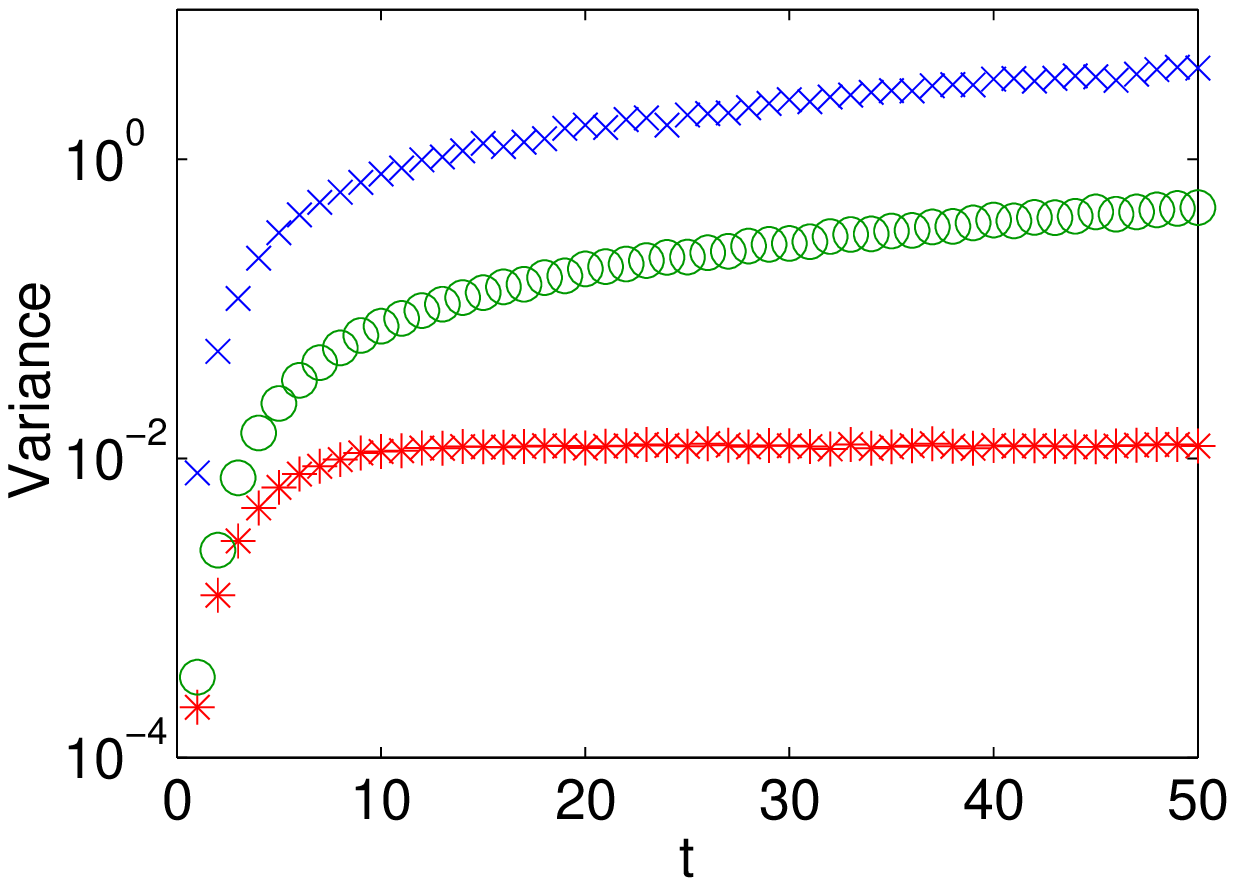} &  \includegraphics[width=3in]{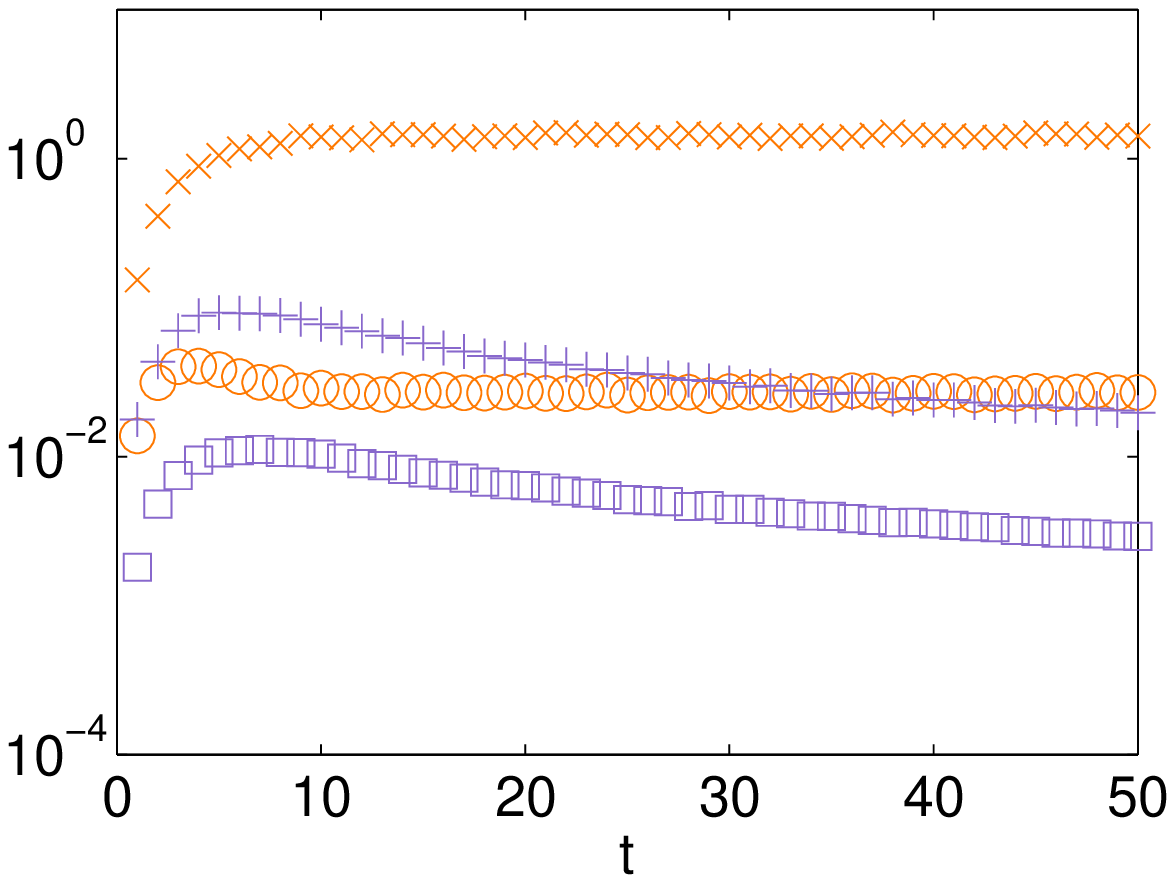} 
\end{array}$
\caption{A comparison of the sensitivity estimates and method variance for the birth-death model of Section \ref{ex:bd} as the time $t$ is varied. $10^4$ paths were used for  each method, and  $X_\theta(0)=0$, and $\theta_0=(10,0.5)$.   The parameter $h$ for the CFD method was chosen as a fraction of the parameter $\theta_2$.  Similarly, the parameter $w$ for the RPD method was chosen as a fraction of the time $t$, which varies in this experiment.
}
\label{fig:bdestvar}
\end{figure}

\begin{figure}
\centering
\textbf{Comparison of efficiency for $\theta_2$ sensitivity estimation, Birth-Death model}
\vskip1ex
\includegraphics[width=3in]{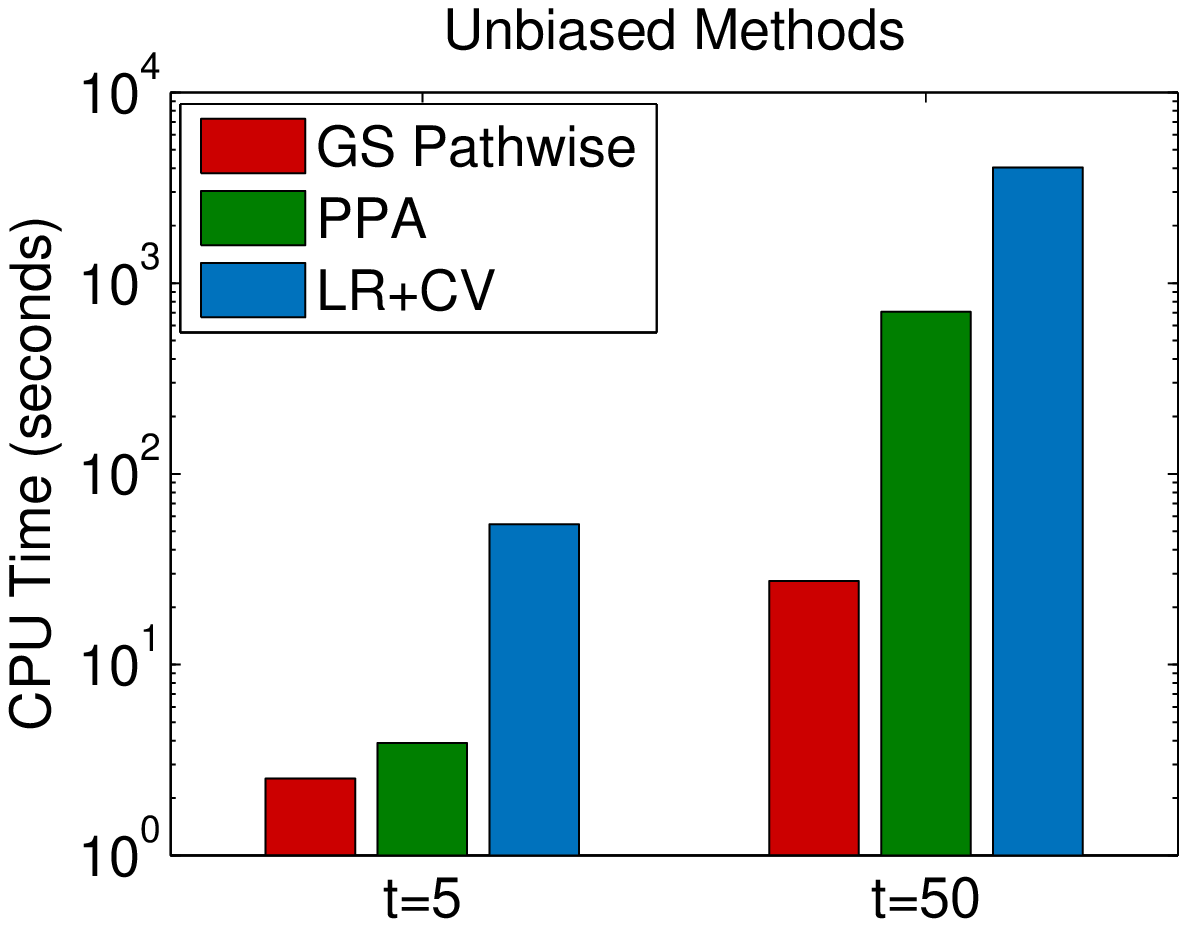}   \includegraphics[width=3in]{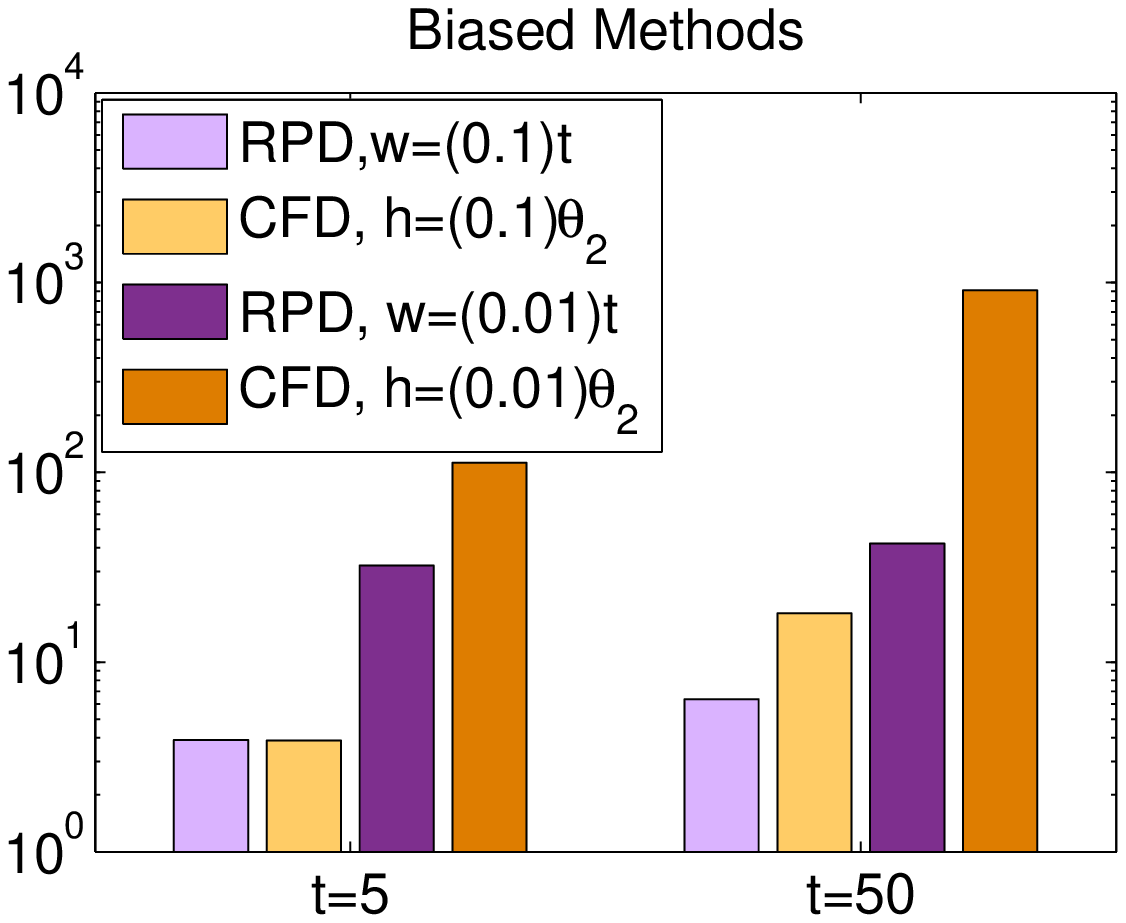}
\caption{A comparison of the efficiency of the different methods in estimating the sensitivity with respect to $\theta_2$ of the birth-death model of Section \ref{ex:bd} with $X_\theta(0)=0$, and $\theta_0=(10,0.5)$. Two different times, 5 and 50, were used. The CPU times reported are the times required by the different  methods to produce a target confidence interval of half-width equal to 1\% of the absolute value of the sensitivity.  Note that 
a log scale is used.
}
\label{fig:bdbar}
\end{figure}

A more straightforward comparison of method efficiency can be provided by finding the CPU time required for each method to estimate the sensitivity to  a given tolerance.  In  Figure \ref{fig:bdbar}, we report these CPU times when we run each method until it produced a half-width equal to 1\% of the absolute value of the sensitivity.  As can be seen in the figure, the GS pathwise method is significantly more efficient than the other unbiased methods.  Indeed, at time $t=5$, the GS pathwise method is over 3 times faster than PPA, and more than 20 times faster than the LR+CV method.  At time $t=50$, the GS pathwise method is over 25 times faster than PPA, and nearly 150 times faster than the LR+CV method.   At time $t=5$, the GS pathwise method is also more efficient than any of the biased methods used.  

The efficiency of the biased methods RPD and CFD is highly influenced by the choice of the parameter $w$ or $h$.  At time $t=50$, the RPD method with $w=(0.1)t=5$ is over 4 times faster than the GS pathwise method, though at the cost of a small bias.

Finally, we note here that on this model and the other models simulated, the LR+CV method, which uses the weighting function as a control variate, generally has variances at least an order of magnitude smaller than the usual LR method in which a control variate is not used.  The additional computational cost of adding this control variate  is negligible.  

\subsection{A simple switch}\label{ex:switch}
In contrast to the linear growth model, the following simple switch is one in which the two pathwise-only methods can have a large bias if no correction term is added:
\[
	A \overset{\theta_1}{\rightarrow} \emptyset, \quad A \overset{\theta_2}\rightarrow B, \quad B \overset{\theta_3}{\rightarrow} C,
\]
with $X_\theta(0)=(a,0,0)$ giving  the initial abundances of $A, B,$ and $C$ respectively.  We estimate the derivative with respect to $\theta_1$ of the mean number of $C$ molecules, $\frac{\partial}{\partial\theta_1}\E [X_{\theta,C}(t)]$, at $\theta=(\frac{1}{4},1,1)$ and at various times $t$.
Since this model is linear, we can solve for the sensitivity exactly at $\theta=(\theta_1,1,1)$:
\[\frac{\partial}{\partial\theta_1}\E [X_{\theta,C}(t) ] = \frac{ae^{-t}}{\theta_1^2} - \frac{a}{(1+\theta_1)^2} - ae^{-(1+\theta_1)t}\bigg(\frac{\theta_1^2 t +\theta(t+2)+1 } { \theta_1^2(1+\theta_1)^2}
\bigg).
\]

\subsubsection{Pathwise-only methods are biased}

We consider the bias of the GS pathwise and RPD methods in computing the sensitivity $\frac{\partial}{\partial\theta_1} \E [X_{\theta,C}(t)]$.
For the GS pathwise method we  use  $\E[X_{\theta,C}(t)] = \E[ \int_0^t X_{\theta,B}(s)\, ds],$ which follows from \eqref{eq:dynkin}. 
For the RPD method, we use
\[  
	 \E\left[ \frac{1}{2w} \int_{T-w}^{T+w} X_{\theta,C}(s)\, ds\right]
\]
as an approximation to $\E[X_{\theta,C}(T)]$.
As shown in Figure \ref{fig:switchrpd}, the RPD and GS pathwise methods  provide  biased estimates, with the bias ranging from small to (very) large, depending on the initial condition and time, $t$. 
\begin{figure}
\centering

\textbf{Error of pathwise-only methods, switch model}

\includegraphics[width=3in]{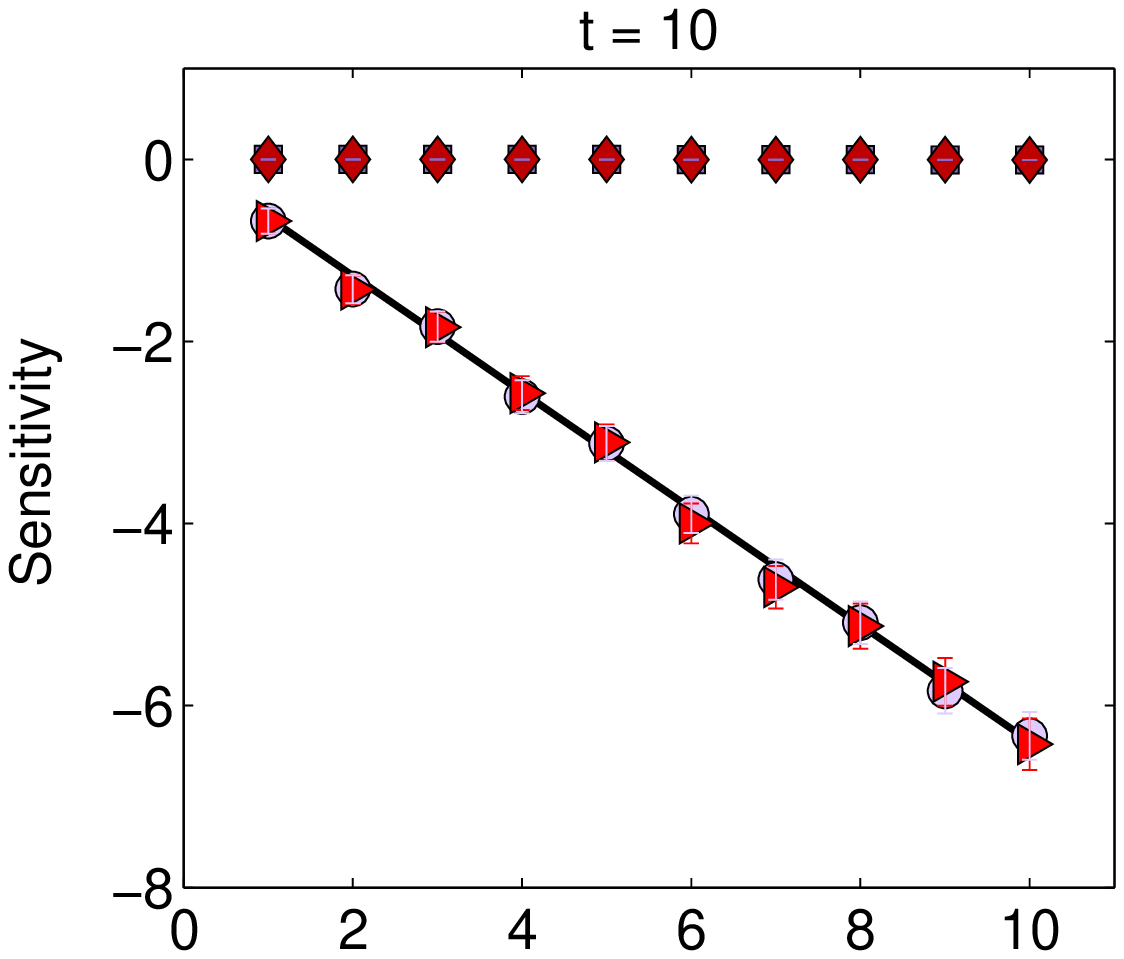}\\
\includegraphics[width=3in]{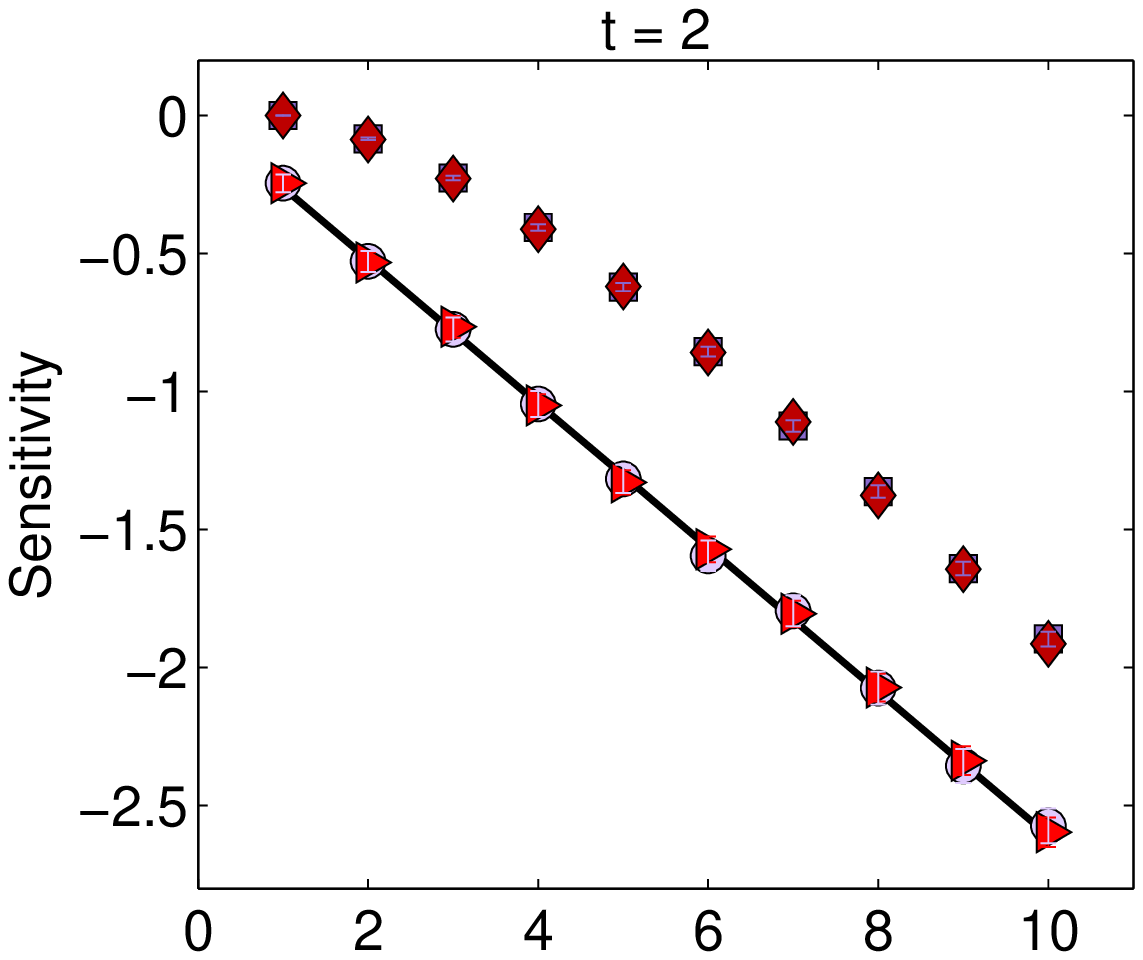}\\
\includegraphics[width=3in]{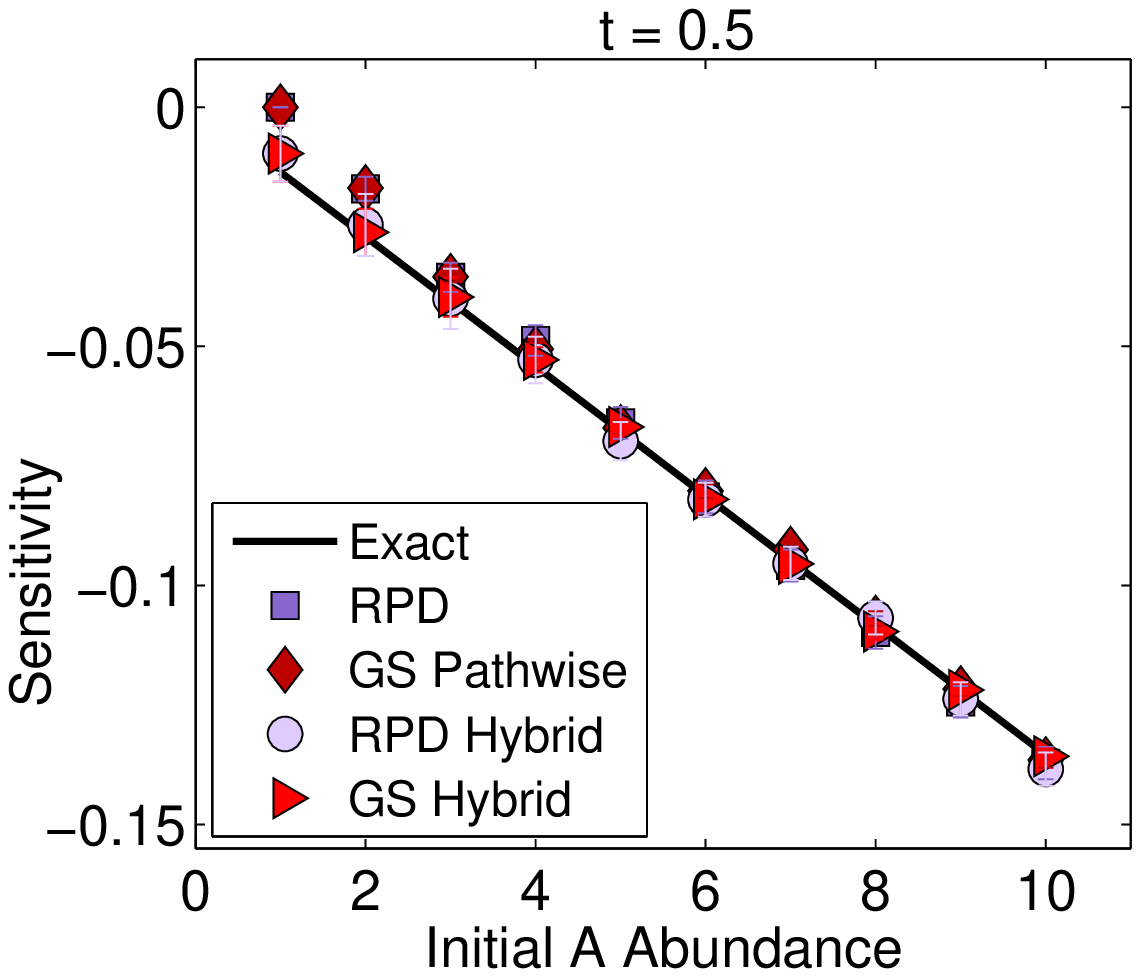}
\caption[Error of pathwise-only methods, Switch]{A demonstration of the significant bias of the pathwise-only methods (RPD and GS) for the estimation of the sensitivity of $\E[ X_{\theta,C}(t)]$ with respect to $\theta_1$ in the switch model of Section \ref{ex:switch}.  Various initial $A$ abundances and three different times $t$ are used. The GS Hybrid and RPD Hybrid method estimates are also shown; both  estimate the exact sensitivity well. Each estimate used $10^5$ paths, and a value of $w=(0.1)t$ was used for both the RPD and RPD Hybrid methods. For $t=10$, both hybrid methods used $30\%$ pathwise estimates (and $70\%$ coupled likelihood estimates); at $t=2$, both used $75\%$ pathwise estimates; and at $t=0.5$, both used $90\%$ pathwise estimates.}
\label{fig:switchrpd}
\end{figure}
 In fact, at $t=10$, these two methods provide estimates of approximately zero for a sensitivity of magnitude approximately 6.
At a small time of $t=0.5$, the  RPD and GS pathwise methods show only a small bias, though it is still noticeable for small initial abundances of $A$.    
In each plot of Figure \ref{fig:switchrpd}, the same value of $w$ was used for both the RPD and RPD Hybrid methods (the hybrid methods for this example are discussed below).  

These results confirm that neither the RPD method nor the GS pathwise method is unbiased for models with interruptions.  Further, the biases can be substantial.  

\subsubsection{Comparison of valid methods}\label{sec:switchrates}
 To use the hybrid methods introduced in this paper, we 
 construct $Z_\theta$ as in Section \ref{sec:hyb} with

\begin{equation}\label{eq:switchrates} 
\lambda_1^Z(\theta,z) =  
\begin{cases}
\frac{1}{4}
 &  z_A < 1
\\
\frac{1}{4}z_A & \textrm{otherwise}
\end{cases}\;\;,\quad  \lambda_2^Z(\theta,z) =  
\begin{cases}
1
 &  z_A < 1
\\
 z_A  & \textrm{otherwise}
\end{cases}\;\;, \quad \lambda_3^Z(\theta,z) = \begin{cases}
0 & z_B < 1\\
z_B & \textrm{otherwise}
\end{cases}.
\end{equation}

\noindent The process $Z_\theta$ may now reach states in which the first coordinate  is negative.
We may allow the rate $\lambda_3^Z(\theta,x)$ to be zero because the reaction $B \to C$ can never be interrupted by another reaction; see item \ref{implnonint} of Section \ref{sec:issues}. 
Hence, the $Z_\theta$ constructed with rates \eqref{eq:switchrates} is still non-interruptive.

In Figure \ref{fig:switchbar}, we give a comparison of method efficiency with $a=10$ for times $t=0.5, t=2$ and $t=10$. Again, we give the time required for each method to achieve a confidence interval of half-width equal to $1\%$ of the magnitude of the sensitivity.   At $t=0.5$, the GS Hybrid method is significantly more efficient than any other method; in particular, it is almost 10 times faster than PPA and over 165 times faster than LR+CV, the other unbiased methods considered.  As time increases to $t=2$, however, PPA becomes the most efficient method.  At $t=10$ the advantage of PPA over the hybrid methods is even more significant: PPA is over 30 times faster than the GS Hybrid method.
Interestingly, at $t=10$, the LR+CV method is very nearly as efficient as PPA.  
This is a particularly striking example of why future work should include a study of the regimes in which a given method is likely to be the most efficient choice.

Note that in this example the biased methods with the given parameter choices are less efficient than the most efficient unbiased method at each time we considered.
   
\begin{figure}
\centering
\textbf{Comparison of efficiency for $\theta_1$ sensitivity estimation, switch model}
\includegraphics[width=6in]{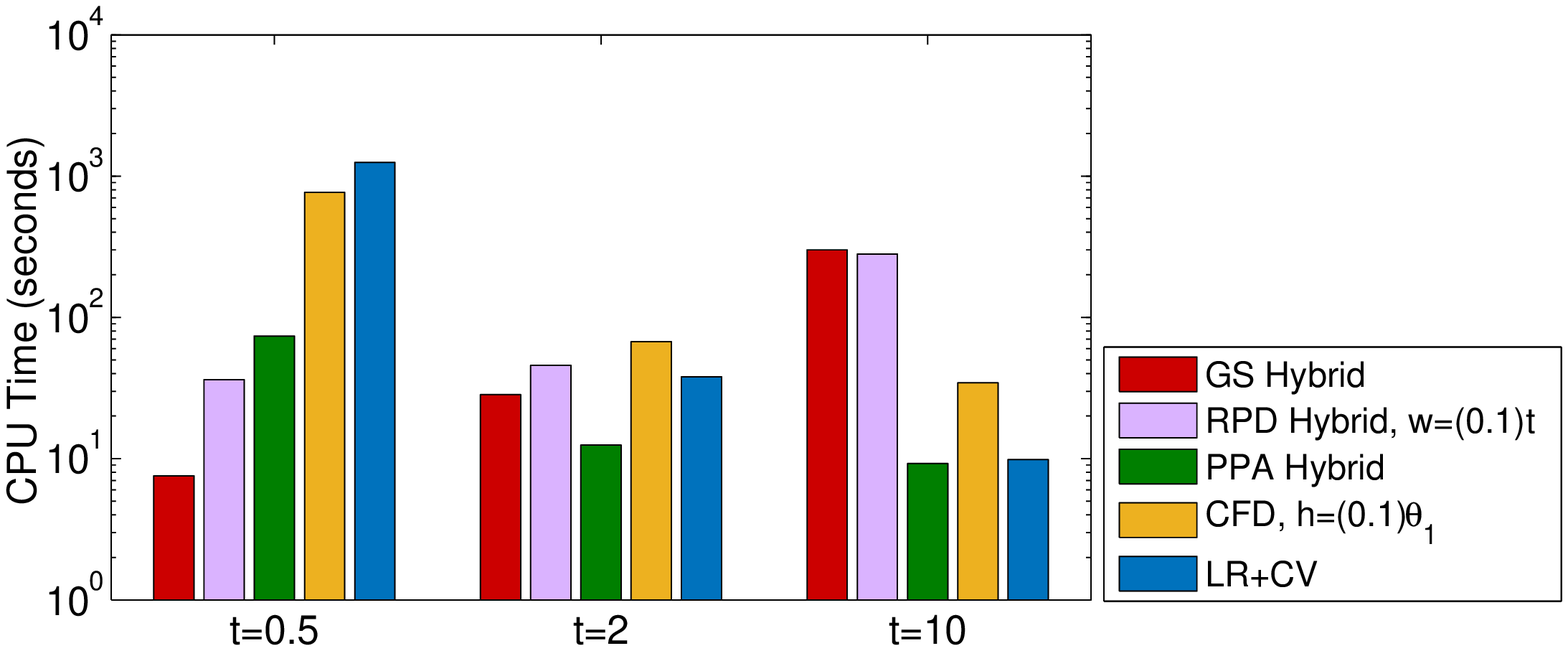}
\caption[1st order method comparison, Switch]{An efficiency comparison for the estimation of the sensitivity of $\E[ X_{\theta,C}(t)]$ with respect to $\theta_1$ in the switch model of Section \ref{ex:switch}, with $a=10$.  CPU time gives the computation time in seconds required to achieve a confidence half-width of $1\%$ of the sensitivity.  
Via the optimization procedure described in Section \ref{sec:issues}, the GS Hybrid method used approximately $36\%$ pathwise estimates, versus $64\%$ coupled likelihood ratio estimates, when $t=10$; when $t=2$, the method used $76\%$ pathwise estimates, and when $t=0.5$  it used $100\%$ pathwise estimates.  That is, the best allocation strategy is significantly different at these various times.  The RPD Hybrid method similarly uses more pathwise estimates at smaller times, though the exact allocation is different for the two choices of the parameter $w$.  For both hybrid methods, the optimization step is included in the computation time.  The time required for the optimization step, which for this experiment included sampling 500 pathwise estimates and 500 coupled likelihood estimates, was approximately $0.10$ seconds for $t=0.5$, $0.15$ seconds for $t=2$, and $0.25$ seconds when $t=10$.}
\label{fig:switchbar}
\end{figure}

\subsubsection{Michaelis--Menten kinetics} \label{ex:MM}
We  demonstrate the hybrid methods on a non-mass action model.  In particular, the standard Michaelis--Menten approximation of the substrate--enzyme model
\[S \rightarrow \emptyset, \quad  E + S \rightleftarrows ES \rightarrow E + P, \quad P \rightarrow \tilde P 
\]
would lead to the model
\[
	S \overset{\theta_1}{\rightarrow} \emptyset, \quad S \overset{*}\rightarrow P, \quad P \overset{\theta_3}{\rightarrow} \tilde P,
\]
where the intensity $(*)$ is given by $\lambda_2^X(\theta,X_\theta)= \frac{\theta_2 X_{\theta,S}}{\theta_4 + X_{\theta,S}}$, and where $X_{\theta,S}$ denotes the number of substrate molecules. The other two rates follow mass action kinetics.  See for example \cite{PetzoldMM}, from which we obtained the relevant parameter values, $\theta = (1/20, 1, 1, 11)$.  Note that this network is analogous to the switch model above.  For the needed approximate model we use
\[ 
\lambda_1^Z(\theta,z) =  
\begin{cases}
\frac{1}{20}
 &  z_S < 1
\\
\frac{1}{20}z_S & \textrm{otherwise}
\end{cases}\;\;,
\quad \lambda_2^Z(\theta,z) =  
\begin{cases}
\frac{\theta_2 }{\theta_4 +1}
 &  z_S < 1
\\
 \frac{\theta_2 z_S}{\theta_4 + z_S}  & \textrm{otherwise}
\end{cases}\;\;,\quad  \text{and} \quad\lambda_3^Z(\theta,z) = \begin{cases}
0 & z_P < 1\\
\theta_3 z_P & \textrm{otherwise}
\end{cases}.
\]
Again note that the third reaction cannot be interrupted.  We estimate $\frac{\partial}{\partial\theta_1} \E [X_{\theta,\tilde P}(t)]$ at times $t=2$ and $t=20$; the actual sensitivity values are approximately $0.23$ and $29$ respectively.
The results are similar to the results of the mass action switch model of Section \ref{sec:switchrates}. See Figure \ref{fig:mmbar}.  In particular, for the small time $t=2$, the hybrid methods are more efficient than PPA and the other methods.  In particular, the GS Hybrid method is over 7 times faster than PPA.  At the  time of $t=20$, when the intensity of each reaction channel in the system is often zero, the PPA and LR+CV methods are most efficient, with PPA returning the desired estimate over 12 times faster than the GS Hybrid method.

\begin{figure}
\centering
\textbf{Comparison of efficiency for $\theta_1$ sensitivity estimation, Michaelis-Menten switch model}
\includegraphics[width=5in]{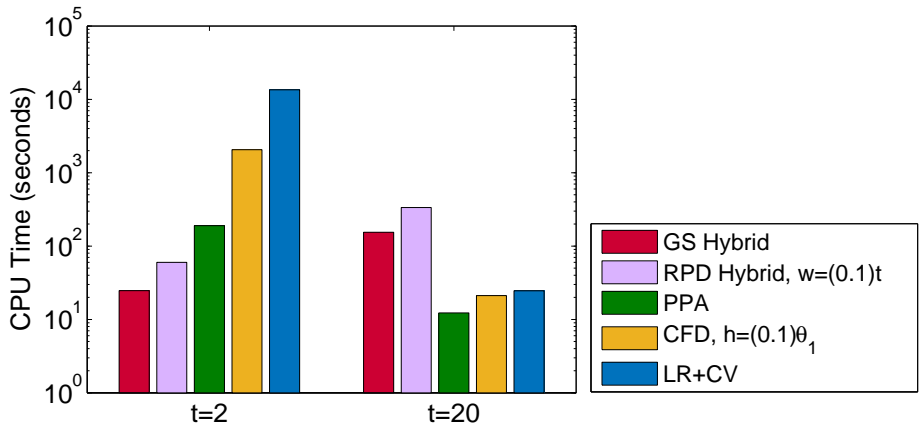}
\caption[1st order method comparison, Michaelis--Menten]{An efficiency comparison for the estimation of $\frac{\partial}{\partial\theta_1} \E [X_{\theta,\tilde P}(t)]$ in the Michaelis--Menten switch model of Section \ref{ex:MM} with an initial $S$ quantity of $10$.  CPU time gives computation time in seconds  required to achieve a confidence half-width of $1\%$ of the sensitivity value.  
}
\label{fig:mmbar}
\end{figure}

\subsection{Dimerization}
\label{sec:ex:dimer}

We  consider a model of mRNA transcription and translation in which, additionally, the protein dimerizes.  
 Table \ref{table:dimerrates} gives the reactions of the model.  Since the model does not satisfy the non-interruptive Condition \ref{nonint}, this table also provides the rates that were used for the approximate process $Z_\theta$ in the hybrid methods.

\begin{table}[h]
\centering
\begin{tabular}{lll|ll}
 & Reaction& $\lambda_k^X$ & \hspace{0.5in}$\lambda_k^Z$\\
\hline
1.) transcription & $\emptyset \hspace{.1cm}\rightarrow\hspace{.1cm} M$ & $\theta_1$ &  $\;\;\;\theta_1$
\\&&&\\
2.) translation & $M{\rightarrow} \hspace{.15cm}M + P$ & $\theta_2X_M$ & $\begin{cases}
\theta_2  & \hspace*{1.5cm} Z_M < 1
\\
\theta_2 \tilde M &  \hspace*{1.5cm}  \theta_2Z_M \geq \theta_2 \tilde M
\\
\theta_2Z_M & \hspace*{1.5cm} \textrm{otherwise}
\end{cases}$
\\
&&&\\
3.) dimerization & $P + P \hspace{.15cm}{{\rightarrow}} \hspace{.15cm} D$ & $\theta_3X_P(X_P-1)$ & 
$\begin{cases}
\theta_3  &  Z_P < 2
\\
\theta_3 \tilde M &  Z_P \geq 2  \quad \textrm{and} \\ & \theta_3Z_P(Z_P-1) \geq \theta_3 \tilde M
\\
\theta_3Z_P(Z_P-1) & \textrm{otherwise}
\end{cases}$
\\
&&&\\
4.) degradation & $M\hspace{.15cm}{{\rightarrow}} \hspace{.15cm} \emptyset$& $\theta_4X_M$ & 
$\begin{cases}
\theta_4 \tilde M &  \hspace*{1.5cm}  \theta_4Z_M \geq \theta_4 \tilde M
\\
\theta_4Z_M & \hspace*{1.5cm} \textrm{otherwise}
\end{cases}$
\\
&&&\\
5.) degradation & $P\hspace{.15cm}{{\rightarrow}} \hspace{.15cm} \emptyset$& $\theta_5X_P$ & 
$\begin{cases}
\theta_5  & \hspace*{1.5cm} Z_P < 1
\\
\theta_5 \tilde M &  \hspace*{1.5cm}  \theta_5Z_P \geq \theta_5 \tilde M
\\
\theta_5Z_P & \hspace*{1.5cm} \textrm{otherwise}
\end{cases}$
\\
&&&\\
6.) degradation & $D\hspace{.15cm}{{\rightarrow}} \hspace{.15cm} \emptyset$& $\theta_6X_P$ &  $\begin{cases}
\theta_6 \tilde M &  \hspace*{1.5cm}  \theta_6Z_D \geq \theta_6 \tilde M
\\
\theta_6Z_D & \hspace*{1.5cm} \textrm{otherwise}
\end{cases}$
\\
\end{tabular}
\caption{Reactions and hybrid rates for the dimerization model of Section \ref{sec:ex:dimer}.
  We take all initial quantities equal to zero and $\tilde M=10^6$ (we have added a tilde to avoid confusion with the symbol for mRNA). For the process $Z_\theta$ to be non-interruptive, we need only prevent three of the intensities from being zero: $\lambda_2, \lambda_3,$ and $\lambda_5$. Indeed, $\lambda_1$ is constant, and reactions 4 and 6 cannot be interrupted by another reaction.  
}
\label{table:dimerrates}
\end{table}

\subsubsection{Dimer abundance sensitivity}
We first estimate the sensitivity $\frac{\partial}{\partial\theta_3}\E[ X_{\theta,D}(t)]$ at time $t=1$, with  $\theta = (200,100,0.1,25,1,1)$, and with zero initial quantities.
In the first bar graph in Figure \ref{fig:dimer}, we show the time required by each  method to compute an estimate to within $5\%$ of the sensitivity value.
The GS Hybrid method is again the most efficient of the unbiased methods, returning the estimate over 8 times faster than PPA and over 600 times faster than the LR+CV method.   In this experiment, for the GS Hybrid method to achieve the target variances determined by the optimization procedure, approximately 53\% of the estimates samples were pathwise estimates, with the other 47\% being coupled likelihood estimates.  See Section \ref{sec:issues}.

The CFD method with $h=(0.1)\theta_3$  is seen to be significantly more efficient than the other methods, including the unbiased methods.  Of course,  the bias of any such finite difference method is generally unknown, which is an issue if high accuracy is a priority.  For example, with $h=(0.1)\theta_3$ the CFD method returns an estimate of $145 \pm 1$, while the actual sensitivity is $\approx$ 141; that is, the bias is approximately $3\%$ of the sensitivity value.  Furthermore, as expected, the variance is inversely proportional to the size of $h$, and when $h$ is changed to $(0.01)\theta_3$, the CFD method becomes less efficient than all other methods except LR+CV.  This illustrates the issue for biased methods that, a priori, one generally does not know which values of $h$ will provide an efficient estimate with acceptable bias. 
The RPD hybrid method suffers a similar difficulty in the choice of $w$:  one generally cannot know the bias of a particular $w$ without numerical experimentation.  For example, with $w=(0.1)t$, the RPD Hybrid method  also has a bias of approximately $3\%$, as it returns an estimate of  $145 \pm 1$.

\begin{figure}
\centering
\textbf{Comparison of efficiency for $\theta_3$ sensitivity estimation, dimerization model}
\vskip1ex
\begin{multicols}{3}
\includegraphics[height=2in]{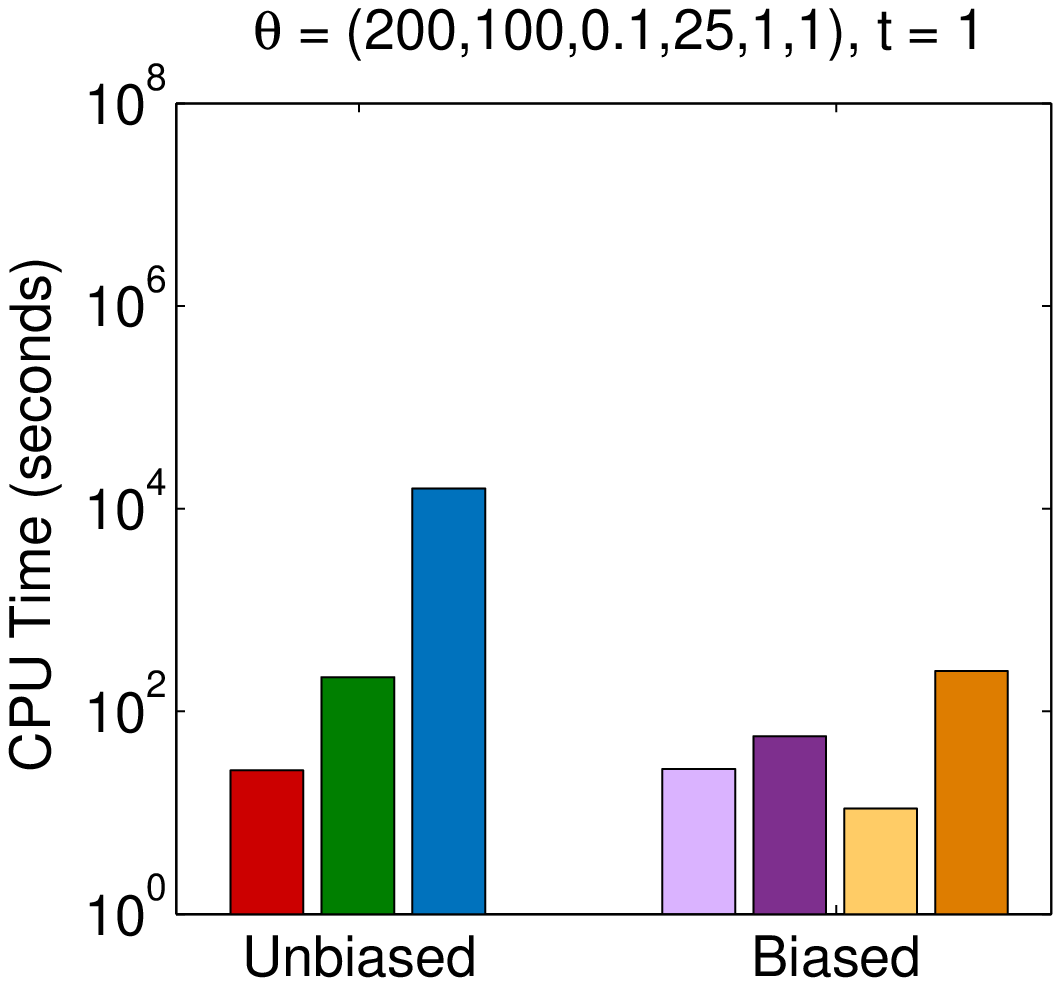}
\columnbreak

\includegraphics[height=2in]{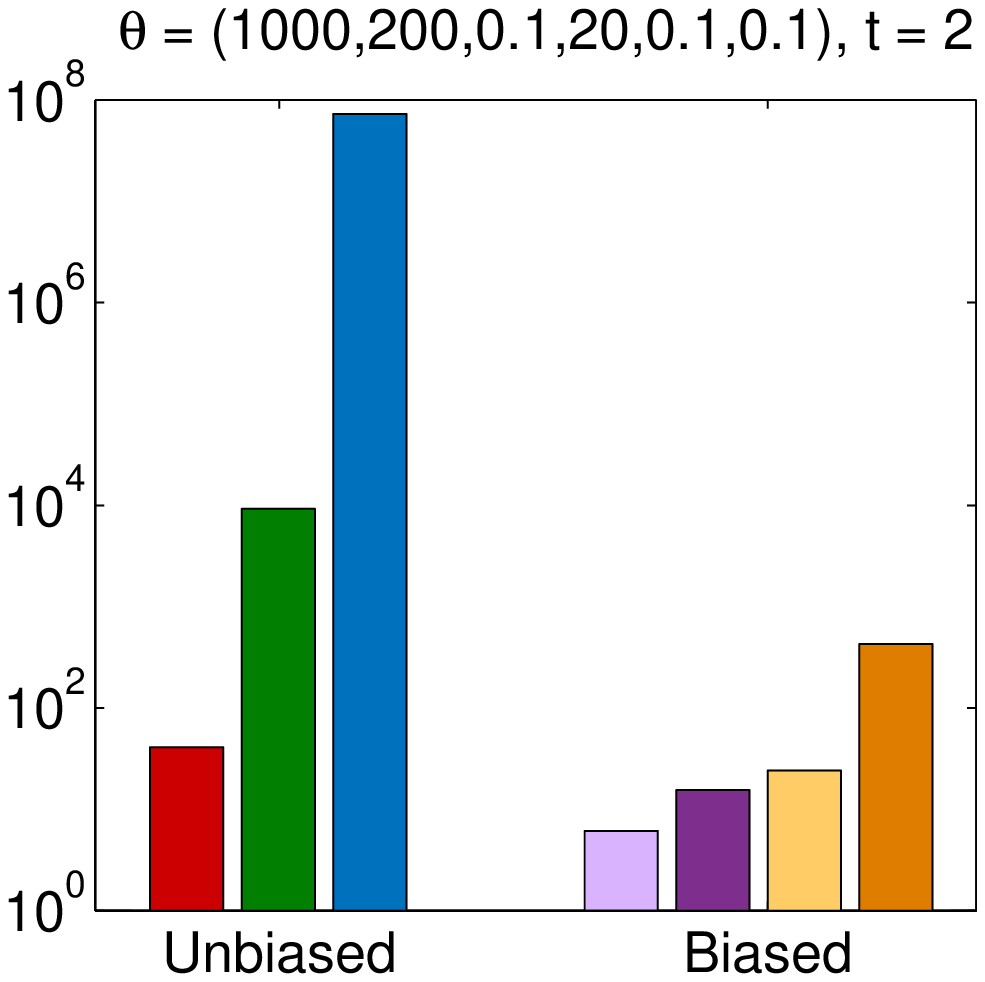}
\columnbreak

$\begin{array}{l}
\\
\includegraphics[height=.75in]{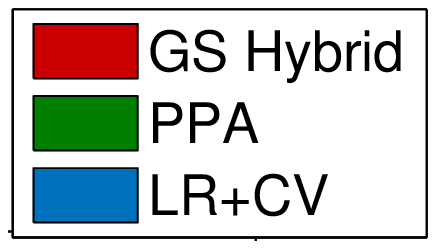}\\
\includegraphics[height=.9in]{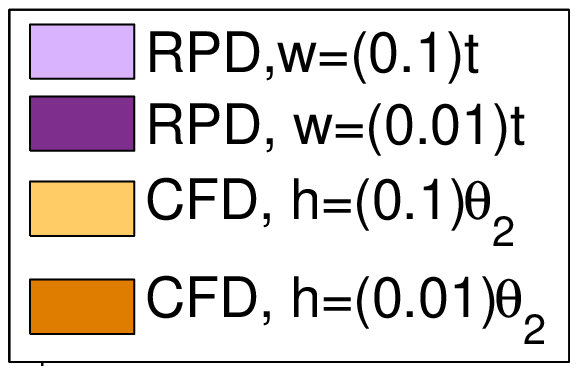}
\end{array}$
\end{multicols}
\caption[1st order method comparison, Dimerization I]{A comparison of efficiency of the sensitivity methods on the dimerization model of Section \ref{sec:ex:dimer} to compute $\frac{\partial}{\partial\theta_3}\E [X_{\theta,D}(t)]$.  We provide two estimates.  The first estimate is at $\theta=(200,100,0.1,25,1,1)$, $t=1$, and zero initial conditions; the second is at  $\theta = (1000,200,0.1,20,0.1,0.1)$, $t = 2$, and an initial condition of $X_{\theta,M}(0)=50$ and other initial abundances equal to 0.  
CPU gives computation time in seconds required to reach a confidence half-width of $5\%$ of the sensitivity value. In the second graph, the CPU time given for the LR+CV method is an estimate based on the variance of partial data.}
\label{fig:dimer}
\end{figure}

We next include results for computing $\frac{\partial}{\partial\theta_3}\E [X_{\theta,D}(t)]$ at a different set of parameters, namely  $\theta = (1000,200,0.1,20,0.1,0.1)$, at time $t = 2$ and with an initial condition of $X_{\theta,M}(0)=50$ and other initial abundances equal to 0.  As shown in the second graph in Figure \ref{fig:dimer}, in order to achieve a half-width of approximately $5\%$ of the value of the sensitivity, the GS Hybrid method is by far the most efficient unbiased method.  In particular, the PPA method requires over 225 times more computation time than the GS hybrid  method. We estimate that the LR+CV method requires approximately $1.8\times10^6$ times more computation time than the GS hybrid method, though we were not able to complete the numerical computations for the LR+CV method due to the fact that the time required to do so was so large.    
We note that, for this example, the approximate paths $Z_\theta$ simulated for the pathwise estimates of the GS Hybrid method were all valid realizations of the original process $X_\theta$. That is, with very high probability, the coupled likelihood estimator is zero or near zero.  Thus, contrary to the previous set of parameters, in this experiment, all estimates were pathwise estimates.  See Section \ref{sec:issues}.

Note that for this particular experiment, the RPD Hybrid method is more efficient than the GS Hybrid method, by a factor of almost 7 when $w=(0.1)t=0.2$, and by a factor of about 2.5 when $w=(0.01)t=0.02$.  
 Furthermore, the bias of the RPD method is less significant than for the previous choice of parameters.  In particular, the bias of the RPD Hybrid method when $w=(0.1)t$ is only approximately $1\%$ of the actual value, returning an estimate of $557\pm1$ while the actual value is $\approx552$; when $w=(0.01)t$, the bias is only about $0.8\%$.  As described above for the GS Hybrid method, the RPD Hybrid method used only pathwise estimates in this experiment. 
Also note that the RPD Hybrid method, with either choice of $w$, is more efficient than the CFD method at either choice of $h$ we considered.

\subsubsection{Integrated dimerization rate sensitivity}\label{sec:flux}

We  consider the functional
\[ \int_0^t \lambda_3(\theta,\xts)ds = \int_0^t \theta_3 X_{\theta,P}(s)(X_{\theta,P}(s)-1)\, ds,
\]
which is the integral of the rate of the dimerization reaction, at $t=5$ and at $\theta_0 = (200,10,0.01,25,1,1)$.
This quantity is a functional of the path and we therefore use the pathwise hybrid method, outlined in and around \eqref{eq:2408957},  on this quantity directly.  That is, we do not need to use the martingale representation \eqref{eq:mart_rep} as we have in previous examples.  The RPD and PPA methods are not applicable for the computation of this sensitivity.
Also note that, unlike in previous examples, the functional depends explicitly on $\theta$, which requires the methods to take into account the partial derivative of the functional in both pathwise and likelihood ratio estimators.

Instead of estimating a single derivative, we estimate the full gradient.  Further, for this example we estimate the efficiency of the methods by simulating each valid method for a fixed amount of time and comparing the resulting confidence intervals for each of the entries of the gradient.  Table \ref{table:flux2} provides this comparison for the pathwise hybrid, the LR+CV, and the CFD methods.  
As shown in the table, the pathwise hybrid method is significantly more precise than the LR+CV method, which is the only other unbiased method that is applicable for this problem. 
The pathwise hybrid method is also significantly more precise than the CFD method, which for this experiment used the relatively large perturbations of $h=(0.1)\theta_i$ for the $i$th entry of the gradient (which leads to a smaller variance). The relatively poor behavior of the CFD method is partially due to the fact that, unlike the pathwise hybrid and LR+CV  methods, the CFD method cannot reuse paths for different gradient estimates since the simulated paths have only one particular parameter perturbed.  This problem with finite difference methods grows in significance as the dimension of $\theta$ grows.

\begin{table}
\centering
\begin{tabular}{|clr p{.25cm}  lp{.01cm}l  rp{.25cm}  lp{.01cm}l  rp{.25cm}ll|}
\hline
& \multicolumn{1}{c}{}&  \multicolumn{3}{c}{Pathwise hybrid} &&& \multicolumn{3}{c}{LR+CV} &&& \multicolumn{3}{c}{CFD} &
\\
\hline
\multirow{6}{*}{\parbox{.5cm}{$\nabla_\theta$}} & & 
0.5713&$\pm$&0.0067  &   &   &   0.5685&$\pm$&0.0501     &&&   0.5669&$\pm$&0.0146  &\\ 
&&11.48&$\pm$&0.13   &   &   &  11.14&$\pm$&0.67      & &&  11.26 &$\pm$&  0.27  &\\

&&3401&$\pm$&34    &  &    & 3162&$\pm$&308        &&&   3403&$\pm$& 126& \\

&&-4.559&$\pm$& 0.051 &    &  &   -5.046&$\pm$&0.419      & &&  -4.544  &$\pm$& 0.114&\\

&&-55.95&$\pm$&0.59  &    &  &   -57.33&$\pm$&4.48    & &&    -53.32  &$\pm$& 1.57 &\\
&&0.0&$\pm$&0.0    &    &  &   -0.1&$\pm$&2.4     &  &&   0.0 &$\pm$& 0.0& \\
\hline
CPU Time  &&& 68 & & &&& 68 &&  &&& 68 &&
\\
\hline
\end{tabular}
\caption[1st order method comparison, Dimerization IV]{A comparison of sensitivity methods on the dimerization model of Section \ref{sec:ex:dimer}.  Estimates of $\nabla_\theta \E [\int_0^t \lambda_5(\theta,\xts)\, ds]$ are given for $t=5$ and at $\theta_0 = (200,10,0.01,25,1,1)$.  CPU gives computation time in seconds.  Recall that the hybrid and LR+CV methods are unbiased, while CFD is not.  Note that the total computation time used by each of the three methods is approximately equal (we have rounded the values to the nearest second for clarity).  As the CFD method must compute each estimate one by one, the total computation time was allocated approximately equally for each of the six estimates.}
\label{table:flux2}

\end{table}

\section{Conclusions}\label{sec:concl}

We have provided a new class of methods for the estimation of parametric sensitivities.  These  hybrid methods include a pathwise estimate but also a correction term, ensuring that the bias is either mitigated (in the case of the RPD hybrid method) or zero.  In particular, the GS hybrid method is, along with the LR and PPA methods, only the third unbiased method so far developed in the current setting for the estimation of derivatives of the form $\ppti \E[f(X_\theta(t))]$.  

For computing sensitivities of the form $\ppt \E[ f(\xtt)]$ at some fixed time $t$, two methods were highlighted. The GS hybrid method is unbiased, and can be significantly more efficient than existing unbiased methods.  At the cost of a small, controllable bias, the RPD hybrid method, which utilizes the RPD method of \cite{Khammash2012} for the pathwise estimate, can often increase efficiency futher, particularly at large times when the system may be nearing stationarity.
A useful avenue of future work will be to study these and other existing sensitivity methods on a wider range of networks and parameter values to better describe which method might be most efficient for a given model of interest.

\vspace{.2in}
\noindent\textbf{Acknowledgments.}
	Anderson and Wolf were both supported by NSF grant  DMS-1318832.   Anderson was also supported under Army Research Office grant W911NF-14-1-0401. We thank  James Rawlings for suggesting the study of Michaelis--Menten kinetics.

	\appendix

\section{Proof of Theorem \ref{thm:pw_first}}\label{app:PE}

We restate Theorem \ref{thm:pw_first}.
\vspace{.1in}

\noindent \textbf{Theorem 1.}\textit{
Suppose that the process $\ztheta$ satisfies the stochastic equation \eqref{eq:pmodel} with $\lambda_k$ satisfying Conditions \ref{nonint} and \ref{cond:extra} on a neighborhood $\Theta$ of $\theta$.  Suppose that the function $F$ satisfies Condition \ref{Fcond} on $\Theta$.  For some $0 \le a \le b < \infty$, let  $L_Z(\theta) = \int_a^b F(\theta,\zts)\, ds$. Then  $ \frac{\partial}{\partial \theta_i} \E \left[ L_Z(\theta)\right]= \E\left[ \frac{\partial}{\partial \theta_i} L_Z(\theta)\right]$, for all $i \in \{1,\dots,R\}$.
}

\vspace{.1in}

The proof of Theorem \ref{thm:pw_first} is similar to that of Theorem 5.1 in \cite{Glasserman1990}.  The main difference is in the proof of the continuity of the function $L$, which is our Lemma \ref{lem:ascont} below.  As in Section \ref{sec:pwest}, for convenience throughout this appendix we take $R=1$ (so that $\theta$ is 1-dimensional).
\vspace{.1in}

 We first need some preliminary results.   Let $N(\theta,t)$ be the number of jumps of $\ztheta$ through time $t$.

\begin{lem}\label{moments}
For any fixed and finite $t$, $q\in[1,\infty)$, and $c\in[1,\infty)$, we have
\[ \E \left[\sup_{\theta\in\Theta}N(\theta,t)^q\right] < \infty \;,\quad \E\left[ \sup_{\theta\in\Theta}\sup_{s\in[0,t]}\Vert \zts \Vert^q\right] < \infty
\quad \textrm{ and } \quad
\mathbb{E}\left[\displaystyle\sup_{\theta\in\Theta}c^{N(\theta,t)}\right] < \infty.
\]
\end{lem}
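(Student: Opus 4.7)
\medskip

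\noindent\textbf{Proof plan for Lemma \ref{moments}.}

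The crucial ingredient is part 1 of Condition \ref{cond:extra}: on the neighborhood $\Theta$ of $\theta$, every intensity is uniformly bounded by $\Gamma_M$, i.e.\ $\sup_{\theta\in\Theta}\sup_{z\in\mathcal S}\lambda_k(\theta,z)\le\Gamma_M$ for every $k$. The plan is to exploit this uniform bound to dominate $Z_\theta$ pathwise, simultaneously in $\theta\in\Theta$, by a process built from the driving Poisson processes $\{Y_k\}$ alone.

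First I would recall that by the pathwise construction \eqref{eq:pmodel}, all of the processes $\{Z_\theta\}_{\theta\in\Theta}$ are built on one probability space from the same independent unit-rate Poisson processes $Y_1,\dots,Y_K$. Let $N_k(\theta,s)$ denote the number of firings of reaction $k$ by time $s$, so that $N_k(\theta,s)=Y_k\!\left(\int_0^s\lambda_k(\theta,Z_\theta(u))\,du\right)$ and $N(\theta,t)=\sum_k N_k(\theta,t)$. Since $Y_k$ is nondecreasing and the integrand is bounded above by $\Gamma_M$ uniformly in $\theta\in\Theta$ and in $u$, we obtain the pathwise domination
\[
\sup_{\theta\in\Theta} N_k(\theta,t)\;\le\; Y_k(\Gamma_M t),\qquad k=1,\dots,K,
\]
and hence $\sup_{\theta\in\Theta} N(\theta,t)\le \sum_{k=1}^{K}Y_k(\Gamma_M t)$. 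The right-hand side is the value at $\Gamma_M t$ of a Poisson process of rate $K$ (after summing the $K$ independent unit-rate ones), so it is a Poisson$(K\Gamma_M t)$ random variable. This immediately yields the first and third claims: Poisson random variables have finite moments of every order and finite exponential moment generating function, so $\E[\sup_\theta N(\theta,t)^q]<\infty$ for every $q\ge 1$ and $\E[\sup_\theta c^{N(\theta,t)}]\le\E[c^{\sum_k Y_k(\Gamma_M t)}]=\exp\!\bigl(K\Gamma_M t\,(c-1)\bigr)<\infty$ for every $c\ge 1$.

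For the second claim, I would bound the state in terms of jump counts: for any $s\in[0,t]$,
\[
\Vert Z_\theta(s)\Vert
\;\le\;\Vert Z_\theta(0)\Vert + \sum_{k=1}^{K}\Vert\zeta_k\Vert\, N_k(\theta,s)
\;\le\;\Vert Z_\theta(0)\Vert + \sum_{k=1}^{K}\Vert\zeta_k\Vert\, Y_k(\Gamma_M t),
\]
where the last bound is independent of $\theta$ and $s$. Taking $\sup_{\theta,s}$ on the left and then $q$-th powers, convexity gives
\[
\E\!\left[\sup_{\theta\in\Theta}\sup_{s\in[0,t]}\Vert Z_\theta(s)\Vert^q\right]
\;\le\; C_{K,q}\,\Vert Z_\theta(0)\Vert^q + C_{K,q}\sum_{k=1}^{K}\Vert\zeta_k\Vert^q\,\E[Y_k(\Gamma_M t)^q],
\]
which is finite since each $Y_k(\Gamma_M t)$ is Poisson, proving the middle claim.

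The only place any subtlety enters is that the domination must be pathwise and simultaneous in $\theta$; this is why I need the uniform upper bound $\Gamma_M$ from Condition \ref{cond:extra} rather than merely the polynomial growth in Condition \ref{lineargrowth}. Once that uniform bound is in hand, the comparison with a fixed Poisson random variable makes the three assertions essentially immediate, so I do not anticipate a real obstacle.
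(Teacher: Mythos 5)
Your proposal is correct and follows essentially the same route as the paper: the uniform bound $\Gamma_M$ from Condition \ref{cond:extra} gives a (pathwise, hence stochastic) domination of $\sup_{\theta\in\Theta}N(\theta,t)$ by a Poisson random variable with parameter $K\Gamma_M t$, after which all three claims follow from finiteness of polynomial and exponential moments of the Poisson distribution, with the state bound $\Vert Z_\theta(s)\Vert\le\Vert Z_\theta(0)\Vert+\sum_k\Vert\zeta_k\Vert N_k(\theta,s)$ handling the middle assertion. Your version is slightly more explicit about why the domination holds simultaneously in $\theta$ (monotonicity of the $Y_k$ applied to the time-changed arguments), but the substance is identical.
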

 
\begin{proof}
Note that by  Condition  \ref{cond:extra}, $N(\theta,t)$ is stochastically bounded, uniformly in $\theta$, by a Poisson random variable $\hat N$ with parameter $\tilde\Gamma = tK\Gamma_M$.  This proves the first bound immediately.  To see the second result, note that $\sup_{s\in[0,t]}\Vert \zts \Vert \leq \Vert Z_\theta(0)\Vert + N(\theta,t)\max_{k}| \1 \cdot \zeta_k|$ and use the first result.  To prove the final bound, use that $\E \left[\sup_{\theta\in\Theta}c^{ N(\theta,t)}\right] \leq \E [c^{\hat N}]$, and that
\begin{align*}
\E [c^{\hat N}] = \sum_{m=0}^\infty c^m \mathbb{P}(\hat N = m) = \sum_{m=0}^\infty c^m \frac{\tilde\Gamma^m}{m!} e^{-\tilde\Gamma} = e^{-\tilde\Gamma}\sum_{m=0}^\infty  \frac{(c\tilde\Gamma)^m}{m!} = e^{-\tilde\Gamma} e^{c\tilde\Gamma} < \infty.
\end{align*}
\end{proof}

\begin{lem}\label{lem:ascont}
For any $\theta\in\Theta$ and for $h>0$ such that $(\theta-h,\theta+h)\subset\Theta$, with probability $1-O(h^2)$ we have that $L_Z(\theta)$ is continuous and piecewise differentiable on $(\theta-h,\theta+h)$.
\end{lem}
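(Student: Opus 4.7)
\noindent\textbf{Proof Plan for Lemma \ref{lem:ascont}.}

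The strategy is to isolate the only mechanism by which $L_Z(\theta')$ can fail to be continuously/piecewise differentiably dependent on $\theta'$ over a small interval, namely a reordering of the embedded jump sequence, and then bound the probability of such a reordering on $(\theta-h,\theta+h)$.

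Step 1 (Reduction to a ``no-reordering'' event). Working on the filtered probability space supporting $\{Y_k\}$, the next-reaction algorithm of Section \ref{sec:pwest} constructs $Z_{\theta'}$ as a measurable functional of $\theta'$ and the independent exponential inter-arrival times determining the internal times $I_k(\cdot)$. For each realization, define
\[
A_h := \bigl\{\text{for every }\theta'\in(\theta-h,\theta+h),\text{ the sequence }(k_\ell(\theta'),\hat Z_\ell(\theta'))_{\ell\ge 0}\text{ agrees with that at }\theta\bigr\}.
\]
On $A_h$, the recursions \eqref{eq:9786786}--\eqref{pptT} together with Condition \ref{cond:extra} show that $\Delta_\ell^{\theta'}$, $T_\ell^{\theta'}$, and $S_k^{\theta'}(T_\ell^{\theta'})$ are smooth in $\theta'$ (the $\lambda_k$ are smooth and uniformly bounded above and away from zero on the relevant states). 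Since $F$ is differentiable in $\theta$ by Condition \ref{Fcond}, the representation
\[
L_Z(\theta') = \sum_{\ell=0}^{N(\theta,b)} F(\theta',\hat Z_\ell)\,[T_{\ell+1}^{\theta'}\wedge b - T_\ell^{\theta'}\vee a]^+
\]
is a finite sum of smooth functions of $\theta'$, except at the (finitely many) $\theta'$ where some $T_\ell^{\theta'}$ crosses $a$ or $b$, accounting for the word ``piecewise.'' Thus $A_h$ implies continuity and piecewise differentiability on $(\theta-h,\theta+h)$.

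Step 2 (Bounding $P(A_h^c)$). The complement $A_h^c$ is, by the nonexplosive bound of Lemma \ref{moments}, the event that for some $\ell\le N(\theta,b)$ and some pair $k\ne j$, the function
\[
\Delta^{(\ell)}_{k,j}(\theta') := \frac{I_k(T_\ell^{\theta'})-S_k^{\theta'}(T_\ell^{\theta'})}{\lambda_k(\theta',\hat Z_\ell)} - \frac{I_j(T_\ell^{\theta'})-S_j^{\theta'}(T_\ell^{\theta'})}{\lambda_j(\theta',\hat Z_\ell)}
\]
changes sign on $(\theta-h,\theta+h)$. By Condition \ref{cond:extra} the map $\theta'\mapsto\Delta^{(\ell)}_{k,j}(\theta')$ is Lipschitz with a constant depending only on $\Gamma_M,\Gamma_m,\Gamma'$ and on the (almost surely finite) internal times $I_k(T_\ell^{\theta})$, so a sign change forces $|\Delta^{(\ell)}_{k,j}(\theta)|$ to be of order $h$.

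Step 3 (Density estimate and union bound). Conditional on the past up to jump $\ell$, the pair $(I_k(T_\ell^\theta),I_j(T_\ell^\theta))$ is built from independent standard exponential spacings of $Y_k$ and $Y_j$, so $\Delta^{(\ell)}_{k,j}(\theta)$ has a conditional density on $\mathbb R$ bounded uniformly in $\theta\in\Theta$ and in the history. Exploiting both the smallness of the Lipschitz window \textit{and} the fact that on $A_h^c$ the sign change must in fact occur inside an interval of length $2h$ (so the ``swap location'' is within $h$ of $\theta$), a short estimate gives $P(B_{\ell,k,j})=O(h^2)$, where $B_{\ell,k,j}$ is the event of a sign change of $\Delta^{(\ell)}_{k,j}$ in $(\theta-h,\theta+h)$. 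Conditioning on $N(\theta,b)$, a union bound over $\ell\le N(\theta,b)$ and over the $O(K^2)$ pairs, combined with the moment bound $\E[N(\theta,b)^2]<\infty$ from Lemma \ref{moments}, yields $P(A_h^c)=O(h^2)$.

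The main obstacle is Step 3: one must argue for an $O(h^2)$ bound (rather than the naive $O(h)$ arising from Lipschitz-plus-bounded-density). The idea is that one power of $h$ comes from the width of the interval in which the swap time must lie, while a second power comes from the Lipschitz estimate on $\Delta^{(\ell)}_{k,j}$ near the swap, with the two contributions being independent conditional on the history; making this conditional independence rigorous, and ensuring uniformity of the density bounds under Condition \ref{cond:extra}, is the delicate step. Everything else is a direct composition of smooth pieces and a standard union-bound/moment argument based on Lemma \ref{moments}.
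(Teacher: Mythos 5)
Your plan has a genuine gap in Step 3, and it is not a technical delicacy that can be patched: the event you are trying to bound is simply too large. You define $A_h$ as the event that \emph{no} change to the embedded chain occurs anywhere on $(\theta-h,\theta+h)$, and you then need $P(A_h^c)=O(h^2)$. But the probability that at least one reordering of jumps occurs in an interval of width $2h$ is generically $\Theta(h)$, not $O(h^2)$. To see this concretely, take two reactions with constant intensities $\lambda_1(\theta')=\theta'$ and $\lambda_2\equiv 1$; the first firing times are $I_1/\theta'$ and $I_2$, so the order of the first two jumps swaps exactly at $\theta^*=I_1/I_2$, a random variable with a continuous, locally positive density. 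Hence $P(\theta^*\in(\theta-h,\theta+h))=\Theta(h)$. Your attempt to extract a second power of $h$ by combining ``the width of the interval in which the swap location lies'' with ``the Lipschitz estimate on $\Delta^{(\ell)}_{k,j}$'' is double-counting: the statement that $\Delta^{(\ell)}_{k,j}$ changes sign somewhere in an interval of length $2h$ is, via the Lipschitz bound, exactly the statement that $|\Delta^{(\ell)}_{k,j}(\theta)|\le Ch$, and the bounded-density argument then gives one factor of $h$ and nothing more. There is no second, conditionally independent source of smallness.

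The paper's proof avoids this by working with the weaker event that \emph{at most one} change to the embedded chain occurs on $(\theta-h,\theta+h)$; only the event of \emph{two or more} changes needs to be (and can be) shown to have probability $O(h^2)$, since each change independently contributes a factor $O(h)$ (this part is quoted from Glasserman). The price is that one must then prove $L_Z$ remains continuous \emph{through} a single change point $\theta^*$, and this is where the non-interruptive Condition \ref{nonint} does real work: (ii) if two jumps $k$ and $m$ swap order at $\theta^*$, non-interruptivity guarantees both orders are admissible and lead to the same state $\hat Z_{\ell-1}+\zeta_k+\zeta_m$ two steps later, while the single offending term in \eqref{L} carries the factor $[T_{\ell+1}^{\theta^*}\wedge b-T_\ell^{\theta^*}\vee a]^+=0$ because $T_{\ell+1}^{\theta^*}=T_\ell^{\theta^*}$ at the swap; (iii) if a jump time crosses $a$ or $b$, the newly appearing term vanishes at the crossing. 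Your Step 1 is essentially the paper's case (i) plus case (iii), and is fine as far as it goes, but without the single-change continuity argument the decomposition cannot close, because the event you would need to discard has probability of order $h$, which is fatal when this lemma is later combined with the Cauchy--Schwarz estimate $h^{-2}\,\E\big[(L_Z(\theta+h)-L_Z(\theta))^2\big]\,P(h\ge\tilde h)$ in the proof of Theorem \ref{thm:pw_first}.
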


\begin{proof}
There are two parts to the proof.  First, we show that if on the interval $(\theta-h,\theta+h)$ no more than one change occurs to the embedded chain $\hat Z_\ell$ on the interval $[a,b]$, then $L_Z(\theta)$ is continuous on that interval.  Second, we require that the probability of two or more such changes is $O(h^2)$.  The proof of the second claim follows as in the second part of Appendix 5.B in \cite{Glasserman1990}, p. 120, so we do not include it here.

We prove the first claim.  Suppose that there is at most one change to the embedded chain in the time interval $[a,b]$ on $(\theta-h,\theta+h)$.  Then one of the following cases occurs:
\begin{enumerate}[(i)]
\item there is no change to the embedded chain,
\item two (or more) jumps switch order through time $b$, causing a change in the embedded chain of $Z_\theta$, or
\item some jump enters or exits the interval $[a,b]$, changing the number states appearing in the integral $L_Z$.
\end{enumerate}
We have crucially used the non-interruptive Condition \ref{nonint} here, and the fact that $Z_\theta$ satisfies the random time change representation \eqref{eq:pmodel}, to exclude any other possibilities, including interruptions.
What we must show is that $L_Z$ is continuous in each case.
Recall from \eqref{L} that
\begin{equation}\label{pfl}
L_Z(\theta) = \sum_{\ell=0}^{N(\theta,b)} F(\theta,\hat Z_\ell(\theta))[ T_{\ell+1}^{\theta}\wedge b -T_{\ell}^{\theta}\vee a ]^+
\end{equation}
and that $F$ is continuous in $\theta$ by assumption.  By work in Section \ref{sec:pwest}, the jump times $T_\ell^\theta$ are continuous except possibly at values of $\theta$ at which the embedded chain of $Z_\theta$ changes.  Thus it is clear that $L_Z$ is continuous in case ($i$).


Now suppose that ($ii$) occurs at some point $\theta^*\in(\theta-h,\theta+h)$.  Then two reactions $k$ and $m$ occur at the same time.  (The case when three or more reactions occur simultaneously is essentially the same.)  Further suppose these reactions occur as the $\ell\th$ and $(\ell+1)\st$ jumps.  
Then at $\theta^*$, there is a discontinuity in $\hat Z_\ell(\theta)$: from one side the limit is $\hat Z_{\ell-1}(\theta) + \zeta_k$ and from the other it is $\hat Z_{\ell-1}(\theta) + \zeta_m$.  However,  by the non-interruptive Condition, the two reactions may occur in either order, and the net result of the two reactions is the same regardless: $\zeta_k + \zeta_m$ is added to the system. That is, $\hat X_{\ell+1}(\theta)\equiv Z_{\ell-1}(\theta)+\zeta_k + \zeta_m$ on the whole interval, and furthermore, this crossover of jumps affects no other states of the embedded chain.

Then in the summation \eqref{pfl}, any given term changes continuously except possibly the $\ell\th$ term,
\begin{equation}\label{sumF}
 F(\theta,\hat Z_\ell(\theta))[ T_{\ell+1}^{\theta}\wedge b -T_{\ell}^{\theta}\vee a ]^+ .
\end{equation}
But at $\theta^*$, we have $T_{\ell+1}^{\theta^*}=T_{\ell}^{\theta^*}$.  That is, neither reaction is postponed because the intensities of both are strictly positive.
Therefore, the term \eqref{sumF} is zero at the point of discontinuity, 
and $L_Z(\theta)$ is continuous at $\theta^*$ as needed.

Suppose instead that at $\theta^*$ case ($iii$) occurs.  
Since an additional jump time appears in the interval $[a,b]$ at $\theta^*$, an additional term may show up in the summation \eqref{pfl}.  However, this new jump time $T_\ell^\theta$ must be equal to either $a$ or $b$.  Then $[ T_{\ell+1}^{\theta}\wedge b -T_{\ell}^{\theta}\vee a ]^+$ is zero, and $L_Z$ is again continuous at $\theta^*$.

Finally, $L_Z$ is piecewise differentiable in each case.  Indeed, by the derivations in Section \ref{sec:pwest}, $L_Z$ is differentiable except possibly at values of $\theta$ at which the embedded chain changes, and by assumption there is at most one such value.
\end{proof}


We now prove two useful bounds before finally giving the proof of Theorem \ref{thm:pw_first}.   For the remainder, we assume for convenience that $\Gamma_M, \Gamma_m$, and $\Gamma'$ are at least 1.  

\begin{lem}\label{lem:ddelbound}
For each $\ell$ from 0 to $N(\theta,b)$ we have
\[ M_\ell : = \max_k\max_{j\leq \ell}\bigg| \ppt S_k^\theta(T_j^\theta)\bigg| \leq \Gamma' b(2\Gamma_M\Gamma_m)^\ell, \]
where $\Gamma_M, \Gamma_m,$ and $\Gamma'$ are as in Condition \ref{cond:extra}.   
\end{lem}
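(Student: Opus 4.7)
The plan is to proceed by induction on $\ell$, propagating the bound via the two recursions derived in Section \ref{sec:pwest}, namely \eqref{pptT} expressing $\partial_\theta S_k^\theta(T_\ell^\theta)$ in terms of $\partial_\theta S_k^\theta(T_{\ell-1}^\theta)$ and $\partial_\theta\Delta_{\ell-1}^\theta$, and \eqref{pptDelta} expressing $\partial_\theta\Delta_{\ell-1}^\theta$ in terms of $\partial_\theta S_{k_{\ell-1}}^\theta(T_{\ell-1}^\theta)$. The uniform bounds $\lambda_k\le \Gamma_M$, $1/\lambda_k\le \Gamma_m$ (at states at which $\lambda_k$ is not identically zero, which must be the case whenever the reaction actually fires), and $|\partial_\theta \lambda_k|\le \Gamma'$ from Condition \ref{cond:extra} are what drive the recursion.

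For the base case $\ell=0$, observe that $T_0^\theta=0$ for every $\theta$, so $S_k^\theta(T_0^\theta)=0$ identically in $\theta$ and hence $\partial_\theta S_k^\theta(T_0^\theta)=0$ for every $k$. Thus $M_0=0\le \Gamma' b$, as required.

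For the inductive step, assume $M_{\ell-1}\le \Gamma' b (2\Gamma_M\Gamma_m)^{\ell-1}$. Using \eqref{pptDelta} together with Condition \ref{cond:extra}, and noting that since $\ell\le N(\theta,b)$ we have $\Delta_{\ell-1}^\theta = T_\ell^\theta-T_{\ell-1}^\theta\le b$,
\[
\bigl|\partial_\theta \Delta_{\ell-1}^\theta\bigr|
\le \Gamma_m\Gamma'\,\Delta_{\ell-1}^\theta + \Gamma_m\bigl|\partial_\theta S_{k_{\ell-1}}^\theta(T_{\ell-1}^\theta)\bigr|
\le \Gamma_m\bigl(b\Gamma' + M_{\ell-1}\bigr).
\]
Plugging this into \eqref{pptT} and again applying Condition \ref{cond:extra} gives, for every $k$,
\[
\bigl|\partial_\theta S_k^\theta(T_\ell^\theta)\bigr|
\le M_{\ell-1}+\Delta_{\ell-1}^\theta\Gamma'+\Gamma_M\bigl|\partial_\theta\Delta_{\ell-1}^\theta\bigr|
\le (1+\Gamma_M\Gamma_m)\bigl(M_{\ell-1}+b\Gamma'\bigr).
\]
Since this bound dominates $M_{\ell-1}$ (because $1+\Gamma_M\Gamma_m\ge 1$ and $b\Gamma'\ge 0$), taking the maximum over $k$ and combining with $M_{\ell-1}$ yields $M_\ell\le (1+\Gamma_M\Gamma_m)(M_{\ell-1}+b\Gamma')$. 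The assumption $\Gamma_M,\Gamma_m\ge 1$ then gives $1+\Gamma_M\Gamma_m\le 2\Gamma_M\Gamma_m$, and substituting the inductive hypothesis closes the induction.

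There is no real obstacle here: the lemma is essentially an exercise in bookkeeping with the explicit recursions of Section \ref{sec:pwest} and the uniform estimates of Condition \ref{cond:extra}. The only points worth being careful about are (a) that $\Delta_{\ell-1}^\theta\le b$ for $\ell\le N(\theta,b)$, which is immediate from the definition of $N(\theta,b)$, and (b) that whenever $\lambda_{k_{\ell-1}}$ appears in a denominator it is strictly positive (indeed the reaction $k_{\ell-1}$ was just fired), so the lower bound $\lambda_{k_{\ell-1}}^{-1}\le\Gamma_m$ from part 2 of Condition \ref{cond:extra} legitimately applies.
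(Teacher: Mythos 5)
Your overall strategy---induction on $\ell$ driven by the recursions \eqref{pptDelta} and \eqref{pptT} and the uniform bounds of Condition \ref{cond:extra}---is the same as the paper's. But there is a genuine arithmetic gap in the inductive step: your recursion is $M_\ell\le 2\Gamma_M\Gamma_m\,(M_{\ell-1}+b\Gamma')$, and substituting $M_{\ell-1}\le \Gamma'b(2\Gamma_M\Gamma_m)^{\ell-1}$ gives
\[
M_\ell\;\le\;\Gamma'b(2\Gamma_M\Gamma_m)^{\ell}\;+\;2\Gamma_M\Gamma_m\, b\Gamma',
\]
which exceeds the target $\Gamma'b(2\Gamma_M\Gamma_m)^{\ell}$ by the strictly positive additive term $2\Gamma_M\Gamma_m b\Gamma'$. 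The induction closes at $\ell=1$ (where $M_0=0$) but already fails at $\ell=2$; solving your recurrence exactly gives $M_\ell\le \frac{2\Gamma_M\Gamma_m}{2\Gamma_M\Gamma_m-1}\,\Gamma'b\big((2\Gamma_M\Gamma_m)^{\ell}-1\big)$, which is larger than the stated bound by a factor approaching $2$. The culprit is the step where you replace each individual holding time $\Delta_{\ell-1}^\theta$ by the crude bound $b$ \emph{inside} the recursion: that throws away the fact that the holding times are not all simultaneously of order $b$.

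The fix is to carry the holding times through the iteration rather than bounding each one separately. Keep the recursion in the form
\[
M_\ell\;\le\;2\Gamma_M\Gamma_m\,M_{\ell-1}\;+\;2\Gamma'\Gamma_M\Gamma_m\,\Delta_{\ell-1}^\theta,
\]
iterate it down to $M_0=0$ to get $M_\ell\le\sum_{j=0}^{\ell-1}(2\Gamma_M\Gamma_m)^{\ell-1-j}\,2\Gamma'\Gamma_M\Gamma_m\,\Delta_j^\theta\le(2\Gamma_M\Gamma_m)^{\ell-1}\,2\Gamma'\Gamma_M\Gamma_m\sum_{j=0}^{\ell-1}\Delta_j^\theta$, and only then use the telescoping identity $\sum_{j=0}^{\ell-1}\Delta_j^\theta=T_\ell^\theta\le b$ (valid for $\ell\le N(\theta,b)$). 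This yields exactly $M_\ell\le\Gamma'b(2\Gamma_M\Gamma_m)^{\ell}$. (For what it is worth, your weaker bound $2\Gamma'b(2\Gamma_M\Gamma_m)^{\ell}$ would still suffice for the domination argument in the proof of Theorem \ref{thm:pw_first}, but it does not prove the lemma as stated.) Your points (a) and (b) about $\Delta_{\ell-1}^\theta\le b$ and the legitimacy of the $\Gamma_m$ bound on $\lambda_{k_{\ell-1}}^{-1}$ are fine.
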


\begin{proof}
Consider \eqref{pptDelta} and \eqref{pptT} and recall that for each $k$ we have $\ppt S^\theta_{k}(T_{0}^\theta)=0$.  Then
\[
\bigg|\ppt \Delta^\theta_0 \bigg|= \bigg| \frac{\Delta_0^\theta}{ \lambda_{k_\ell}(\theta,\hat Z_\theta(0))}
\ppt \lambda_{k^0}(\theta,\hat Z_\theta(0)) \bigg| \leq \Delta_0^\theta \Gamma' \Gamma_m.
\]
Then for any $k$, we have 
\[\ppt S^\theta_{k}(T_{1}^\theta)= \Delta^\theta_0\ppt \lambda_{k}(\theta,\hat Z_\theta(0)) + \lambda_{k}(\theta,\hat Z_\theta(0))\ppt \Delta^\theta_0,\]
so that
\[M_1 = \max_k\bigg| \ppt S_k^\theta(T_1^\theta)\bigg| \leq \Delta^\theta_0\Gamma' + \Gamma_M\Delta_0^\theta \Gamma' \Gamma_m \leq 2\Gamma'\Gamma_m\Gamma_M\Delta_0^\theta.\]

\noindent Similarly, for a given $\ell$ we have
\begin{align*}\begin{split}
\bigg| \frac{\partial}{\partial\theta}\Delta_\ell^\theta \bigg|
& \leq \bigg| \frac{\Delta_\ell^\theta}{ \lambda_{k_\ell}(\theta,\hat Z_\ell(\theta))}
\ppt \lambda_{k_\ell}(\theta,\hat Z_\ell(\theta))\bigg| + \bigg|\lambda_{k_\ell}(\theta,\hat Z_\ell(\theta))^{-1} \ppt S^\theta_{k_\ell}(T_\ell^\theta)\bigg|
\\
& \leq \Delta_\ell^\theta \Gamma' \Gamma_m +  \Gamma_m M_{\ell-1}.
\end{split}\end{align*}
Therefore, using that
\[\ppt S^\theta_{k}(T_{\ell}^\theta)= \ppt S^\theta_k(T_{\ell-1}^\theta) + \Delta^\theta_{\ell-1}\ppt \lambda_{k}(\theta,\hat Z_{\ell-1}(\theta)) + \lambda_{k}(\theta,\hat Z_{\ell-1}(\theta))\ppt \Delta^\theta_{\ell-1}\]
and noticing that the $M_\ell$ are nondecreasing, we see that
\begin{align*}
M_\ell &\leq M_{\ell-1} + \Gamma'\Delta^\theta_{\ell-1} + \Gamma_M\bigg|\ppt \Delta^\theta_{\ell-1}\bigg|\\
& \leq  M_{\ell-1} + \Gamma'\Delta^\theta_{\ell-1} + \Gamma_M(\Delta_{\ell-1}^\theta \Gamma' \Gamma_m +  \Gamma_m M_{\ell-2})
\\
& \leq  M_{\ell-1} + \Gamma'\Delta^\theta_{\ell-1} + \Gamma_M(\Delta_{\ell-1}^\theta \Gamma' \Gamma_m +  \Gamma_m M_{\ell-1})
\\
& \leq 2\Gamma_M\Gamma_m M_{\ell-1} +  2\Gamma'\Gamma_M\Gamma_m\Delta^\theta_{\ell-1}.
\end{align*}
Iterating this inequality, we see that
\[ M_\ell \leq (2\Gamma_M\Gamma_m)^{\ell-1}2\Gamma'\Gamma_M\Gamma_m \sum_{j=0}^{\ell-1} \Delta^\theta_{j} \leq  \Gamma'b (2\Gamma_M\Gamma_m)^{\ell}.\qedhere
\]
\end{proof}

\begin{cor}
For each $\ell$ from 0 to $N(\theta,b)$ we have
\[ \bigg|\ppt\Delta_\ell^\theta \bigg| \leq 2\Gamma'b\Gamma_m(2\Gamma_M\Gamma_m)^\ell,
 \]
where $\Gamma_M, \Gamma_m,$ and $\Gamma'$ are as in Condition \ref{cond:extra}.
\end{cor}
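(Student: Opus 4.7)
The plan is to observe that the intermediate estimate needed is already sitting inside the proof of Lemma \ref{lem:ddelbound}, so the corollary follows by a short bookkeeping step. Specifically, that proof establishes (for $\ell \geq 1$) the pointwise bound
\[
\bigg|\ppt \Delta_\ell^\theta\bigg| \leq \Delta_\ell^\theta \Gamma'\Gamma_m + \Gamma_m M_{\ell-1},
\]
which was used as a stepping stone toward the recursion for $M_\ell$. I would simply quote this inequality and then feed in the geometric bound on $M_{\ell-1}$ that has already been proved.

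Concretely, first I would replace $M_{\ell-1}$ using Lemma \ref{lem:ddelbound} and bound the single holding time by the total time horizon via $\Delta_\ell^\theta \leq b$. This gives
\[
\bigg|\ppt \Delta_\ell^\theta\bigg| \leq b\Gamma'\Gamma_m + \Gamma_m\cdot \Gamma' b (2\Gamma_M\Gamma_m)^{\ell-1} = \Gamma'\Gamma_m b\bigl[1+(2\Gamma_M\Gamma_m)^{\ell-1}\bigr].
\]
Then, using the standing assumption that $\Gamma_M,\Gamma_m \geq 1$, so that $2\Gamma_M\Gamma_m \geq 2$ and in particular $(2\Gamma_M\Gamma_m)^{\ell-1}\geq 1$, I would estimate $1+(2\Gamma_M\Gamma_m)^{\ell-1} \leq 2(2\Gamma_M\Gamma_m)^{\ell-1} \leq 2(2\Gamma_M\Gamma_m)^{\ell}$, producing the claimed bound $2\Gamma' b \Gamma_m (2\Gamma_M\Gamma_m)^{\ell}$.

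For the base case $\ell=0$, the very first display in the proof of Lemma \ref{lem:ddelbound} gives $|\ppt\Delta_0^\theta|\leq \Delta_0^\theta \Gamma'\Gamma_m \leq b\Gamma'\Gamma_m$, which is already no larger than $2\Gamma' b \Gamma_m (2\Gamma_M\Gamma_m)^{0} = 2\Gamma'b\Gamma_m$, so the bound holds in that case as well. There is no real obstacle in this argument — the delicate step was setting up the recursion for $M_\ell$ in the proof of Lemma \ref{lem:ddelbound}, and the corollary is essentially a packaging of what that recursion already implies for a single holding time derivative.
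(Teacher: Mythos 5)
Your argument is correct and is essentially the paper's own: both start from \eqref{pptDelta}, bound $\Delta_\ell^\theta$ by $b$ and the intensities via Condition \ref{cond:extra}, and then invoke Lemma \ref{lem:ddelbound} on the $\ppt S^\theta_{k_\ell}(T_\ell^\theta)$ term before absorbing the additive $b\Gamma'\Gamma_m$ into the geometric factor. The only cosmetic difference is that the paper bounds $\big|\ppt S^\theta_{k_\ell}(T_\ell^\theta)\big|$ by $M_\ell$ rather than $M_{\ell-1}$ (the safer index, since $T_\ell^\theta$ is covered by $M_\ell$ by definition), which yields the same final bound.
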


\begin{proof}
By \eqref{pptDelta}, the two final assumptions on $Z_\theta$ from Appendix A, and Lemma \ref{lem:ddelbound}, we have that 
\begin{align*}\begin{split}
\bigg|\frac{\partial}{\partial\theta}\Delta_\ell^\theta \bigg|
&\leq  \bigg| \frac{\Delta_\ell^\theta}{ \lambda_{k_\ell}(\theta,\hat Z_\ell(\theta))}
\ppt \lambda_{k_\ell}(\theta,\hat Z_\ell(\theta)) \bigg| + \bigg| \lambda_{k_\ell}(\theta,\hat Z_\ell(\theta))^{-1} \ppt S^\theta_{k_\ell}(T_\ell^\theta)\bigg|
\\
&\leq  b \Gamma_m \Gamma' +  \Gamma_m \bigg|\ppt S^\theta_{k_\ell}(T_\ell^\theta)\bigg|
\\
& \leq b \Gamma_m \Gamma' +  \Gamma_m \Gamma'b(2\Gamma_M\Gamma_m)^\ell
\\
& \leq 2\Gamma'b\Gamma_m(2\Gamma_M\Gamma_m)^\ell.
\end{split}\end{align*}
\end{proof}


We finally turn to the proof of Theorem \ref{thm:pw_first}.  As noted previously, the proof of the theorem now follows similarly to the proof of Theorem 5.1 in \cite{Glasserman1990}.
\vspace{.125in}

\noindent\textit{Proof of Theorem \ref{thm:pw_first}.} Let $\tilde h$ be the infimum over $h$ for which two or more changes occur to the embedded chain of $Z_\theta$ through $(\theta-h,\theta+ h)$ on the time interval $[a,b]$. That is, $\tilde h$ is the \textit{second} place at which a change in the embedded chain occurs.  Note that $\tilde h> 0$ is positive with probability 1.  Without loss of generality, $(\theta-\tilde h, \theta+\tilde h)\subset \Theta$.  We must prove the middle equality in
\begin{align*} 
\ddt \E [L_Z(\theta)] &= \lim_{h\to 0}\E [h^{-1}[L_Z(\theta+h)-L_Z(\theta)]] 
= \E \left[\lim_{h\to 0}h^{-1}[L_Z(\theta+h)-L_Z(\theta)]\right] 
= \E \left[\ddt L_Z(\theta)\right] .
\end{align*}
We write 
\begin{align}\begin{split} \label{thmsplit}
\E [h^{-1}[L_Z&(\theta+h)-L_Z(\theta)]] \\= &\; \E [h^{-1}[L_Z(\theta+h)-L_Z(\theta)] \1(h < \tilde h)] + \E [h^{-1}[L_Z(\theta+h)-L_Z(\theta)] \1(h \geq \tilde h)].
\end{split}\end{align}

\noindent Consider the first term.  By Lemma \ref{lem:ascont}, and since by the definition of $\tilde h$ at most one change occurs to the embedded chain for $h<\tilde h$,  we have that $L_Z$ is  continuous and  piecewise differentiable on $(\theta-\tilde h, \theta+\tilde h)$. By a generalized mean value theorem (e.g. \cite{Dieudonne}),
\[
\big|h^{-1}[L_Z(\theta+h)-L_Z(\theta)]\1(h < \tilde h)\big| \leq \sup_{\theta\in\Theta}\bigg|\ddt L_Z(\theta)\bigg|,
\]
where the supremum is over those points where the derivative exists.  We will show that this supremum has finite expectation; therefore, since as $h\to 0$,
\[
	h^{-1}[L_Z(\theta+h)-L_Z(\theta)]\1(h < \tilde h)\overset{a.s.}{\to}\ddt L_Z(\theta)
\]
we will have by the dominated convergence theorem that $\E[h^{-1}[L_Z(\theta+h)-L_Z(\theta)]\1(h < \tilde h)]\to\E[\ddt L_Z(\theta)]$.  We will also show that the second term in \eqref{thmsplit} goes to zero as $h\to 0$, which proves the theorem.

Write $N:=N(\theta,b)$ and recall that
\begin{align*}
\bigg|\ddt &L_Z(\theta)\bigg|  = \bigg| \sum_{\ell=0}^N [  T_{\ell+1}(\theta)\wedge b -T_{\ell}(\theta)\vee a ]^+
 \left(\ppt F(\theta,\hat Z_\ell(\theta)) \right) 
 +  F(\theta,\hat Z_\ell(\theta))\ppt[  T_{\ell+1}(\theta)\wedge b -T_{\ell}(\theta)\vee a ]^+
\bigg|
\\
& \leq \bigg| \sum_{\ell=0}^N [  T_{\ell+1}(\theta)\wedge b -T_{\ell}(\theta)\vee a ]^+
 \left(\ppt F(\theta,\hat Z_\ell(\theta)) \right)\bigg| 
 + \bigg|  \sum_{\ell=0}^N F(\theta,\hat Z_\ell(\theta))\ppt[  T_{\ell+1}(\theta)\wedge b -T_{\ell}(\theta)\vee a ]^+
\bigg|.
\end{align*}
We now consider these two terms separately. By Condition \ref{Fcond} on $F$, 
\begin{align*}
\bigg| \sum_{\ell=0}^N [  T_{\ell+1}(\theta)\wedge b -T_{\ell}(\theta)\vee a ]^+
 \left(\ppt F(\theta,\hat Z_\ell(\theta)) \right)\bigg|
  & 
  \leq \sum_{\ell=0}^N  T_{\ell+1}(\theta)\wedge b -T_{\ell}(\theta)\vee a ]^+
 \bigg|\ppt F(\theta,\hat Z_\ell(\theta)) \bigg|
 \\ & \leq
 C_2\sum_{\ell=0}^N [  T_{\ell+1}(\theta)\wedge b -T_{\ell}(\theta)\vee a ]^+(1+\Vert \hat Z_\ell^\theta\Vert^{c_2})
 \\
& \leq 
C_2  (b-a)(1+ \max_{\ell\leq N}\Vert\hat Z_\ell^\theta\Vert^{c_2})
  \\
& \leq C_2 (b-a)(1+ \sup_{\theta\in\Theta}\sup_{s\in[0,b]}\Vert \zts\Vert^{c_2}).
\end{align*}

\noindent Now, from \eqref{derivs} and our work in Lemma \ref{lem:ddelbound} we have for any $\ell$ that 
\[
\bigg| \ppt[  T_{\ell+1}(\theta)\wedge b -T_{\ell}(\theta)\vee a ]^+ \bigg|
\leq 
\sum_{j=0}^{N} \bigg| \ppt\Delta_j \bigg|.
\]

\noindent Therefore, for the second term, 
\begin{align*}
 \bigg|\sum_{\ell=0}^N F(\theta,\hat Z_\ell(\theta))\ppt[  T_{\ell+1}(\theta)\wedge b -T_{\ell}(\theta)\vee a ]^+ \bigg|
& \leq 
 C_1 \sum_{\ell=0}^N (1+\Vert \hat Z_\ell^\theta\Vert^{c_1})
 \bigg|\ppt[  T_{\ell+1}(\theta)\wedge b -T_{\ell}(\theta)\vee a ]^+
\bigg| 
 \\
& \leq  
 C_1 (1+ \max_{\ell\leq N}\Vert\hat Z_\ell^\theta\Vert^{c_1}) \sum_{\ell=0}^N 
\sum_{j=0}^{N} \bigg| \ppt\Delta_j \bigg|
\\
& \leq  
 C_1 (1+ \max_{\ell\leq N}\Vert\hat Z_\ell^\theta\Vert^{c_1}) \sum_{\ell=0}^N \sum_{j=0}^{N}
2\Gamma'T\Gamma_m^2(2\Gamma_M\Gamma_m)^j
  \\
& \leq  
 C_1 (1+\sup_{\theta\in\Theta}\sup_{s\in[0,b]}\Vert \zts\Vert^{c_1}) N^2 2\Gamma'T\Gamma_m(2\Gamma_M\Gamma_m)^N.
\end{align*}

\noindent By Lemma \ref{moments} and repeated applications of the Cauchy-Schwarz inequality, we see that both of the bounds we have computed are bounded uniformly in $\theta$ on $\Theta$ by a quantity of finite expectation as needed.

Finally, we must show that $\E [h^{-1}[L_Z(\theta+h)-L_Z(\theta)] \1(h \geq \tilde h)]$ goes to zero as $h\to 0$.  By using the Cauchy-Schwarz inequality, we see that
\[
\E \left[h^{-1}[L_Z(\theta+h)-L_Z(\theta)] \1(h \geq \tilde h)\right]^2
\leq 
h^{-2}\E \big[ [L_Z(\theta+h)-L_Z(\theta)]^2\big] P(h \geq \tilde h).
\]
Since by Lemma \ref{lem:ascont} we have $P(h \geq \tilde h) = O(h^2)$, and since  $[L_Z(\theta+h)-L_Z(\theta)] \overset{a.s}{\to} 0$, we are done by the dominated convergence theorem if we can show that $[L_Z(\theta+h)-L_Z(\theta)]^2$ is bounded by an integrable function.
By Condition \ref{Fcond} on $F$, for any $\theta\in\Theta$,
\begin{align}\begin{split}\label{eq:l2bound}
[L_Z(\theta)]^2 &= \left(\int_a^bF(\theta,\zts)ds\right)^2 \leq(b-a) \int_a^b \big( F(\theta,\zts) \big)^2 ds
\\ 
& \leq (b-a) \int_a^b C_1^2(1+ \Vert \zts\Vert^{c_1})^2 ds
\\
& \leq C_1^2(b-a)^2(2 + 2\sup_{\theta\in\Theta}\sup_{s\in[0,b]}\Vert \zts\Vert^{2c_1}),
\end{split}
\end{align}
where the final line follows because $(a+b)^2 \leq 2a^2 + 2b^2$.  This bound has finite expectation by Lemma \ref{moments}, and is also uniform, so that it holds for $|L_Z(\theta+h)^2|$ as well.  Then 
as needed,
\[
|L_Z(\theta+h)-L_Z(\theta)|^2 \leq 2[L_Z(\theta+h)]^2+2[L_Z(\theta)]^2 \leq 4\sup_{\theta\in\Theta}[ L_Z(\theta)]^2,
\]
which has finite expectation by taking the supremum of \eqref{eq:l2bound}.
\hfill $\square$

\bibliographystyle{amsplain}

\bibliography{refs}  
\end{document}